\newcommand{\R}{\mathbb{R}}
\newcommand{\N}{\mathbb{N}}
\newcommand{\Z}{\mathbb{Z}}
\newcommand{\dd}{\mathrm{d}}
\newcommand{\ddd}{\,\text{\rm{\mbox{\dj}}}}
\newcommand{\bmo}{\mathrm{bmo}}
\newcommand{\BMO}{\mathrm{BMO}}
\newcommand{\SW}{\mathcal{S}}
\newcommand{\m}{\mathtt{m}}
\newcommand{\norm}[1]{\left\Vert#1\right\Vert}
\newcommand{\brkt}[1]{\left(#1\right)}
\newcommand{\abs}[1]{\left|#1\right|}
\newcommand{\esc}[1]{\langle{#1}\rangle}
\newcommand{\Rw}{\mathrm{w}}
    \newtheorem{thm}{Theorem}[section]
    \newtheorem{cor}[thm]{Corollary}
    \newtheorem{lem}[thm]{Lemma}
    \newtheorem{prop}[thm]{Proposition}
    \newtheorem{defn}[thm]{Definition}
    \newtheorem{rem}[thm]{Remark}
    \newtheorem{example}[thm]{Example}
    \numberwithin{equation}{section}
\begin{document}

\title[]{Bilinear pseudodifferential operators with symbol in $BS_{1,1}^m$ on Triebel-Lizorkin spaces with critical Sobolev index}
\keywords{}
\subjclass[2000]{}
\author[S. Arias]{Sergi Arias}
\address{Department of Mathematics, Stockholm University, SE-106 91 Stockholm, Sweden}
\email{arias@math.su.se}
\author[S.~Rodr\'iguez-L\'opez]{Salvador Rodr\'iguez-L\'opez}
\address{Department of Mathematics, Stockholm University, SE-106 91 Stockholm, Sweden}
\email{s.rodriguez-lopez@math.su.se}

\thanks{The authors were partially supported by the Spanish Government grant PID2020-
113048GB-I00, funded by MCIN/AEI/10.13039/501100011033.
}

\subjclass[2020]{Primary 46E35, 47G30;  Secondary 35A23,  42B35}
\keywords{Sobolev embeddings, bilinear pseudodifferential operators, product of functions, local bmo, Triebel-Lizorkin spaces of generalised smoothness}

\maketitle

\begin{abstract}
In this paper we obtain new estimates for bilinear pseudodifferential operators with symbol in the class $BS_{1,1}^m$, when both arguments belong to Triebel-Lizorkin spaces of the type $F_{p,q}^{n/p}(\R^n)$. The inequalities are obtained as a consequence of a refinement of the classical Sobolev embedding $F^{n/p}_{p,q}(\R^n)\hookrightarrow\bmo(\R^n)$, where we replace $\bmo(\R^n)$ by an appropriate subspace which contains $L^\infty(\R^n)$. As an application, we study the product of functions on  $F_{p,q}^{n/p}(\R^n)$ when $1<p<\infty$, where those spaces fail to be multiplicative algebras.
\end{abstract}

\section{Introduction}
The classical Sobolev embedding theorem asserts that if $s>n/2$ and $f$ belongs to the inhomogeneous Sobolev space $L^2_s(\R^n)$, that is, if $\esc{D}^s f\in L^2(\R^n)$, then $f$ is a continuous function vanishing at infinity which satisfies \begin{equation}\label{eq:Sobolev_embedding}
\norm{f}_\infty\lesssim  \norm{\esc{D}^s f}_{L^2(\R^n)}.
\end{equation}
The index $n/2$ is called the critical Sobolev exponent, as it is known that such  inequality fails for $s=n/2$, which motivates the terminology.

For the critical exponent, using some classical embeddings between Triebel-Lizorkin and Besov spaces (see \cite{Runst_Sickel}*{Theorem 2.2.2 and Remark 2.2.3/3}), it is possible to replace $L^\infty(\R^n)$ by the space of functions with local bounded mean oscillation, denoted by  $\bmo(\R^n)$. More precisely, it holds that
\begin{equation}\label{emb_bmo_intro}
    \norm{f}_{\bmo(\R^n)} \lesssim \norm{\esc{D}^{n/2} f}_{L^2(\R^n)},
\end{equation}
which is equivalent to having the embedding $L^2_{n/2}(\R^n)\hookrightarrow\bmo(\R^n)$.

More generally, in the scale of Triebel-Lizorkin spaces $F_{p,q}^s(\R^n)$, which recovers the cases above for $p=q=2$, one has the embedding 
\begin{equation}\label{emb_tri_bmo}
    F^{n/p}_{p,q}(\R^n)\hookrightarrow \bmo(\R^n)
\end{equation}
provided $0<q\leq \infty$ and $0<p<\infty$, while
\begin{equation}\label{emb_tri_Linf}
    F^{s}_{p,q}(\R^n)\hookrightarrow L^\infty(\R^n)
\end{equation}
provided $0<q\leq \infty$, and either $s>n/p$ if $1<p<\infty$ or $s=n/p$ if $0<p\leq 1$ (see \cite{Runst_Sickel}*{Theorem 2.2.4/1}).

Under the condition in \eqref{emb_tri_Linf}, $F_{p,q}^s(\R^n)$ is a multiplicative algebra. Indeed, classical bilinear estimates on paraproducts (see \cite{Runst_Sickel}*{Theorem 4.6.4/2}) yield that for 
\[
    s>n\brkt{\frac{1}{\min(1,p)}-1}
\]
one has
\begin{equation}\label{prod_inter}
    \norm{fg}_{F_{p,q}^{s}(\R^n)}\lesssim\norm{f}_{F_{p,q}^{s}(\R^n)}\norm{g}_{L^\infty(\R^n)}+\norm{f}_{L^\infty(\R^n)}\norm{g}_{F_{p,q}^{s}(\R^n)},
\end{equation}
which jointly with the embedding in \eqref{emb_tri_Linf} gives the result.

Following our previous investigations in \cite{Paper2}, the space $L^\infty(\R^n)$ can be replaced in \eqref{prod_inter} by the larger space $X_w(\R^n)$, which is a suitable subspace of $\bmo(\R^n)$ associated to an admissible weight (see Definition \ref{X_w}). More specifically, we obtain a boundedness property for bilinear pseudodifferential operators with symbol $\sigma$ in the class $BS_{1,1}^m$, $m\in\R$ (see Section \ref{cons_and_appl}). Namely, we show in Proposition \ref{BS_diag_for_Xw} that
\begin{equation}\label{estimate_paper2}
    \norm{T_\sigma(f,g)}_{F_{p,q}^{s,1/w}(\R^n)}\lesssim\norm{f}_{F_{p,q}^{s+m}(\R^n)}\norm{g}_{X_w(\R^n)}+\norm{f}_{X_w(\R^n)}\norm{g}_{F_{p,q}^{s+m}(\R^n)}
\end{equation}
holds for $0<p,q\leq\infty$ and \begin{equation}\label{tau_p_q}
    s>\tau_{p,q}:=n\brkt{\frac{1}{\min(1,p,q)}-1}.
\end{equation}

Here $T_\sigma$ is the associated pseudodifferential bilinear operator (see \eqref{eq:bilineal_psi} below) and $F_{p,q}^{s,1/w}(\R^n)$ denotes a Triebel-Lizorkin space of generalised smoothness (see Definition \ref{GSTL} below) associated to an admissible weight $w$. Bilinear pseudodifferential operators with symbols in the bilinear H\"ormander classes have been widely studied by several authors (see for instance \cites{ben-nah-tor,ben-tor,Grafakos-Oh,Gra-Mal_Nai,koe-tom,Naibo,Naibo_notices,Naibo-Thomson} and references therein).

We shall point out that, when $\sigma\equiv 1$ and $w\equiv 1$, then $T_\sigma$ becomes the product of two functions, the space $X_w(\R^n)$ coincides with $L^\infty(\R^n)$, and  $F_{p,q}^{s,1/w}(\R^n)$ coincides with the classical Triebel-Lizorkin space $F_{p,q}^s(\R^n)$ (see Proposition \ref{xw_prop} below). Therefore, the estimate in \eqref{estimate_paper2} recovers the classical inequality in \eqref{prod_inter} for $q\geq \max(p,1)$.

For the particular choice $w(t)=(1+\log_+ 1/t)$, the space $X_w(\R^n)$ becomes $\bmo(\R^n)$  (see Proposition \ref{xw_prop}). Hence, with that choice of $w$ and letting $\sigma\equiv 1$, one can apply \eqref{estimate_paper2} together with the Sobolev embedding in  \eqref{emb_tri_bmo} to obtain a logarithmically subcritical  estimate for the product of functions in $F^{n/p}_{p,q}(\R^n)$, given by
\begin{align*}
    \norm{fg}_{F_{p,q}^{n/p,1/(1+\log_+ 1/t)}(\R^n)}\lesssim\norm{f}_{F_{p,q}^{n/p}(\R^n)}\norm{g}_{F_{p,q}^{n/p}(\R^n)}. 
\end{align*}

As it turns out, one can improve this last estimate by finding a smaller target space. More precisely, a suitable refined version of the Sobolev embedding in \eqref{emb_tri_bmo}, involving the spaces $X_w(\R^n)$ and choosing an appropriate  admissible weight of logarithmic type, will provide us the desired improvement. In that way, we show in Theorem \ref{main_thm} that 
\begin{equation}\label{main_emb_intro}
        F^{n/p}_{p,q}(\R^n)\hookrightarrow X_w(\R^n)
\end{equation}
holds with a particular logarithmic weight $w$ depending on the parameter $p$.

The refined embedding in \eqref{main_emb_intro}, in combination with the estimate in \eqref{estimate_paper2}, allows to produce new boundedness properties for bilinear pseudodifferential operators with symbol in the class $BS_{1,1}^m$. More specifically, we obtain in Theorem \ref{cor_BS} an estimate for $T_\sigma(f,g)$ when both functions belong to the intersection of two Triebel-Lizorkin spaces, one of them with critical Sobolev index.

In that direction, for the case  $\sigma\equiv 1$, we obtain new estimates on the product of functions in $F_{p,q}^{n/p}(\R^n)$ when $1<p\leq\infty$. We shall point out that those spaces are known to be a multiplication algebra if, and only if, $0<p\leq 1$ (see, for instance, \cite{Runst_Sickel}*{Theorem 4.6.4/1}). Given two functions in $F_{p,q}^{n/p}(\R^n)$, we show in Corollary \ref{Prod_Trie} that the product of their norms is larger than the norm of the product in the space $F_{p,q}^{n/p,1/w}(\R^n)$, with $w$ lying in the scale of weights of the form $w(t)=(1+\log_+1/t)^{\alpha}$, with $\alpha=1/r'$, where $r=\max\{ 1,p\}$ and $r'$ denotes its conjugate H\"older exponent. In addition, we also show in Proposition \ref{prop:sharpness} that, in the generality the estimate is stated, the result is sharp in the scales of Triebel-Lizorkin spaces of generalised smoothness with weight of the form $w(t)=(1+\log_+1/t)^{\alpha}$, $\alpha\in\R$. More specifically, we show that for $p=2$, the estimate does not hold for $\alpha<1/2$.

Triebel-Lizorkin spaces of generalised smoothness $F_{p,q}^{s,w}(\R^n)$ with $p<\infty$, and those of logarithmic smoothness in particular, have been previously studied by several authors \cites{cae-mou,mou,dominguez2018function}. To the best of our knowledge, the definition for generalized smoothness of the spaces $F_{\infty,q}^{s,w}(\R^n)$ appeared in  \cite{Paper2}. We prove in this article that the definition of these spaces
is independent on the underlying resolution of unity (see Proposition \ref{ind_res_uni}) and we include some other properties that were of need in the present work (see Propositions \ref{Lifting_for_infty} and \ref{lifting_inverse_weight}).

Finally, we state in Corollary \ref{non_linear_pdes} and Corollary \ref{log_sch_op} two applications of our study to some PDE. The first one deals with well-posedness of some families of non-linear equation on $L^2_{n/2}(\R^n)$, related to some classical equations in mathematical physics, such as the Schr\"odinger equation, the biharmonic Schr\"odinger equation, the half-wave equation or the fractional Schr\"odinger equation. In the second corollary we obtain regularity results to PDEs related to the logarithmic Schr\"odinger operator.

The paper is organised as follows. In Section \ref{prel} we give some preliminaries, such as notations, definitions and properties of function spaces that will be used in the paper. In Section \ref{main} we state and prove a refined Sobolev embedding. Section \ref{cons_and_appl} contains the estimates that we obtain for bilinear pseudodifferential operators, as well as some of its consequences.

\section{Preliminaries}\label{prel}
The notation $A\lesssim B$ will be used to indicate the existence of a constant $C>0$ such that $A\leq C B$. Similarly, we will write $A\thickapprox B$ if both $A\lesssim B$ and $B\lesssim A$ hold. We will also use the notation $\langle\xi\rangle=(1+\vert\xi\vert^2)^{1/2}$ for $\xi\in\R^n$.

The space of Schwartz functions will be denoted by $\SW(\R^n)$ and its topological dual, the space of tempered distributions, by $\SW'(\R^n)$. For a function $f\in \SW(\R^n)$ we define its Fourier transform as
\[
    \mathcal{F}[f](\xi)=\widehat{f}(\xi)=\int_{\R^n}f(x)e^{-i x\xi}\dd x
\]
and we will write
\[
    a(tD)f(x)=\int_{\R^n}a(t\xi)\widehat{f}(\xi)e^{i x\xi}\ddd\xi
\]
for appropriate symbols $a$, or simply $a(D)$ when $t=1$. Here $\ddd \xi$ denotes the normalised Lebesgue measure in $\R^n$ given by $\ddd \xi=(2\pi)^{-n}\dd \xi$.

\subsection{Spaces of bounded mean oscillation} 
The space of functions with bounded mean oscillation, $\BMO(\R^n)$, is the set of all those locally integrable functions $f$ defined on $\R^n$ for which
\[
    \norm{f}_{\BMO(\R^n)}:=\sup_Q\frac{1}{\vert Q\vert}\int_Q\vert f(x)-f_Q\vert\dd x<\infty.
\]
The supremum is taken over all cubes in $\R^n$ whose sides are parallel to the axis, while $\vert Q\vert$ denotes the Lebesgue measure of the cube $Q$ and $f_Q$ is the average of $f$ over $Q$, namely $f_Q=\frac{1}{\vert Q\vert}\int_Qf(x)\dd x$.

The local version of $\BMO(\R^n)$ was considered by D. Goldberg in \cite{Goldberg}, and it will be denoted by $\bmo(\R^n)$. It is defined to be the set of all locally integrable functions $f$ on $\R^n$ for which
\begin{equation}\label{bmo_def}
    \norm{f}_{\bmo(\R^n)}:=\sup_{\ell(Q)<1}\frac{1}{\vert Q\vert}\int_Q\vert f(x)-f_Q\vert\dd x + \sup_{\ell(Q)\geq 1}\frac{1}{\vert Q\vert}\int_Q\vert f(x)\vert\dd x<\infty.
\end{equation}
Here $\ell(Q)$ denotes the side length of the cube $Q$. The function space $\bmo(\R^n)$ is continuously embedded in $\BMO(\R^n)$.

\subsection{Admissible weights and related spaces}

Let us start by introducing the function spaces $X_w(\R^n)$, first defined in \cite{Salva_Wolf}*{Definition 4.2}. Those spaces have played an important role in our previous investigations \cites{Paper1,Paper2}
\begin{defn}\label{X_w}
    Let $w:(0,\infty)\rightarrow (0,\infty)$ be a function satisfying the following properties:
        \begin{enumerate}[label=\Roman*)]
            \item \label{I}For every {compact} interval $I\subseteq(0,\infty)$ we have that
                  \[
                    0<\inf_{t\in I}\left(\inf_{s>0}\frac{w(st)}{w(s)}\right)\leq\sup_{t\in I}\left(\sup_{s>0}\frac{w(st)}{w(s)}\right)<\infty;
                  \]
            \item \label{II} There exists $N>0$ such that $\sup_{t>0}w(t)(1+1/t)^{-N}<\infty$;
            \item \label{III} $\inf_{t>0} w(t)>0$.
        \end{enumerate}
    Let $\phi$ be a Schwartz function supported in the ball $\{\abs{\xi}\leq 2\}$ which is identically one on $\{\abs{\xi}\leq 1\}$. Then $X_w(\R^n)$ is defined to be the set of all locally integrable functions $f$ for which
    \[
        \norm{f}_{X_w(\R^n)}:=\norm{f}_{\BMO(\R^n)}+\sup_{t>0}\frac{\norm{\phi(tD)f}_\infty}{w(t)}<\infty.
    \]
\end{defn}

Motivated by the following proposition, we may think about $X_w(\R^n)$ as intermediate spaces lying in between $L^\infty(\R^n)$ and $\bmo(\R^n)$.

\begin{prop}\label{xw_prop}
    \cite{Paper1}*{Proposition 2.6} Let $w$ and $\phi$ be as in Definition \ref{X_w}. 
    \begin{enumerate}[label=\alph*)]
        \item The definition of the space $X_w(\R^n)$ does not depend on the different choices of function $\phi$, in the sense that different choices induce equivalent norms.
        \item The embeddings $L^\infty(\R^n)\subset X_w(\R^n)\subset \bmo(\R^n)$ hold.
        \item If $w\approx 1$, then $X_w(\R^n)=L^\infty(\R^n)$ with equivalent norms.
        \item \label{d} For $w(t)=1+\log_+ 1/t$, we have that $X_w(\R^n)=\bmo(\R^n)$ with equivalent norms.
    \end{enumerate}
\end{prop}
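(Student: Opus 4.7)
The plan is to prove the four items in turn, relying on standard frequency-localization tools together with a classical logarithmic $BMO$ telescoping argument for the last (and most delicate) part.

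For (a), I would argue that if $\phi_1,\phi_2$ are any two admissible cutoffs as in Definition \ref{X_w}, then $\phi_2(t\xi/2)\equiv 1$ on the support of $\phi_1(t\xi)$, so $\phi_1(tD)f=\phi_1(tD)\phi_2((t/2)D)f$. Since $\phi_1(tD)$ is convolution with an $L^1$ kernel of norm $\|\check\phi_1\|_1$ independent of $t$, this yields $\|\phi_1(tD)f\|_\infty \lesssim \|\phi_2((t/2)D)f\|_\infty$. Property \ref{I} of $w$ applied with the compact interval $I=\{1/2\}$ gives $w(t/2)\approx w(t)$ uniformly in $t$, and dividing by $w(t)$ then taking the supremum delivers the equivalence of the two norms.

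For (b), the embedding $L^\infty\hookrightarrow X_w$ is immediate from $\|f\|_{\BMO(\R^n)}\lesssim \|f\|_\infty$, the uniform bound $\|\phi(tD)f\|_\infty \leq \|\check\phi\|_1\|f\|_\infty$, and property \ref{III} ensuring $\inf_t 1/w(t)>0$. For $X_w\hookrightarrow \bmo$, I would control the averaging part of \eqref{bmo_def} over cubes of side $\geq 1$ by writing $f=\phi(D)f+(I-\phi(D))f$: the first term is bounded by $w(1)\|f\|_{X_w(\R^n)}$, while the second, whose Fourier support avoids the origin, admits the standard estimate $\sup_{\ell(Q)\geq 1}|Q|^{-1}\int_Q |(I-\phi(D))f|\lesssim \|f\|_{\BMO(\R^n)}$. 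Item (c) then follows: the hypothesis $w\approx 1$ reduces the second supremum in $\|\cdot\|_{X_w(\R^n)}$ to $\sup_{t>0}\|\phi(tD)f\|_\infty$, and a bounded family in $L^\infty$ with $\phi(tD)f\to f$ in $\SW'(\R^n)$ forces $f\in L^\infty$ by weak-$*$ compactness; the reverse inclusion is already in (b).

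The main obstacle is (d). By (b) it suffices to prove $\bmo\hookrightarrow X_w$ when $w(t)=1+\log_+ 1/t$, equivalently
\[
\|\phi(tD)f\|_\infty \lesssim (1+\log_+ 1/t)\,\|f\|_{\bmo(\R^n)}.
\]
For $t\geq 1$, partitioning the convolution kernel against cubes of unit size and invoking the large-cube supremum in the $\bmo$ norm gives $\|\phi(tD)f\|_\infty\lesssim \|f\|_{\bmo(\R^n)}$. For $0<t<1$, I would telescope
\[
\phi(tD)f-\phi(D)f=\sum_{j=1}^{K}\bigl(\phi(2^{-j}D)-\phi(2^{-j+1}D)\bigr)f,\qquad K\approx\log_2(1/t),
\]
and invoke the classical estimate $\|(\phi(2^{-j}D)-\phi(2^{-j+1}D))f\|_\infty\lesssim \|f\|_{\BMO(\R^n)}$, valid since each summand is frequency-localized to an annulus and hence has mean zero. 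The logarithmic factor then emerges from counting $K$ terms, and adding the $t\geq 1$ bound for $\phi(D)f$ closes the estimate. The delicate point is extracting exactly the $\log(1/t)$ growth: a naive convolution bound $\|\phi(tD)f\|_\infty\lesssim \|\check\phi\|_1\|f\|_\infty$ is unavailable since $f\notin L^\infty$ in general, so the annular localization and mean-zero cancellation of the Littlewood--Paley pieces must be exploited carefully.
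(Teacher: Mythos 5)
The paper does not prove this proposition; it is quoted from \cite{Paper1}*{Proposition 2.6}, so there is no in-text argument to compare against. Your proof is correct and follows the standard route for all four items: the reproducing identity $\phi_1(tD)f=\phi_1(tD)\phi_2((t/2)D)f$ plus property \ref{I} for (a), the uniform $L^1$ kernel bound and the fact that $\bmo$ and $\BMO$ norms agree on distributions with Fourier support away from the origin for (b), weak-$*$ compactness for (c), and the telescoping over Littlewood--Paley annuli with the classical bound $\norm{\Delta_j f}_\infty\lesssim\norm{f}_{\BMO(\R^n)}$ for (d). The only cosmetic gap is that your telescoping identity is literally valid for dyadic $t=2^{-K}$; for general $0<t<1$ one first inserts $\phi(tD)f=\phi(tD)[\phi(2^{-K-2}D)f]$ with $2^{-K-1}\leq t<2^{-K}$ and uses the $L^1$ kernel bound, exactly the device the paper itself employs at \eqref{low_part} — a one-line fix.
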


We shall notice as well that, using the properties in \ref{I},  \ref{II} and \ref{III}, it holds that for any $0<c_1\leq c_2$, there exist $0<d_1\leq d_2$ such that
    \begin{equation}\label{comparable_values_w}
         c_1\leq \frac{t}{s}\leq c_2\quad \mbox{implies that}\quad  d_1\leq \frac{w(t)}{w(s)}\leq d_2.
    \end{equation}

In this paper we will use the terminology \textit{admissible weight} for the following type of functions, slightly modifying the original definition of A. Caetano and S. Moura in \cite{cae-mou}*{Definition 2.1}.

\begin{defn}\label{adm_wei}
    Let $w:(0,1]\rightarrow (0,\infty)$ be a monotonic function, and extend it to $w:(0,\infty)\to (0,\infty)$ by defining $w(t)=w(1)$ for all $t\geq 1$. We say that $w$ is an \textit{admissible} weight if there exist $c,d>0$ such that for all $j\in\N$
    \begin{equation*}
        cw(2^{-j})\leq  w(2^{-2j}) \leq d w(2^{-j}).
    \end{equation*}
\end{defn}

\begin{example}\label{Prototype_admissible}
    A kind of functions satisfying the requirements for an admissible weight could be those of the form
    \[
        w(t):=(1+\log_+1/t)^\lambda\brkt{1+\log (1+\log_+1/t)}^\mu,
    \]
    with $\lambda$, $\mu\in \R$ and $\lambda\cdot\mu\geq 0$.
\end{example}

Admissible weights satisify the following property when evaluated on dyadic numbers, which will be useful later.

\begin{prop}\label{comp_weights}
    \cite{mou}*{Proposition 1.4} For any admissible weight $w$ there exist constants $C_1,C_2>0$ and $b\geq 0$ such that
    \[
        C_1(1+j-k)^{-b}\leq\frac{w(2^{-j})}{w(2^{-k})}\leq C_2(1+j-k)^b
    \]
    for any non-negative integers $j\geq k$.
\end{prop}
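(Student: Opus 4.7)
The plan is to exploit the fact that admissibility controls the ratio $w(2^{-2k})/w(2^{-k})$ and, by iteration, also $w(2^{-2^\ell k})/w(2^{-k})$ for every $\ell\in\N$. Since any integer $j\geq k$ is sandwiched between two consecutive dyadic dilations $2^{\ell-1}k$ and $2^{\ell}k$ of $k$, the monotonicity of $w$ on $(0,1]$ transfers the bound at dyadic points to a bound at $j$, which can then be expressed in terms of $j-k$.

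First I would prove, by induction on $\ell\in\N$, that
\[
c^{\ell} w(2^{-k})\leq w(2^{-2^\ell k})\leq d^{\ell} w(2^{-k})
\]
for every $k\in\N$ with $k\geq 1$: the base case $\ell=1$ is the defining admissibility inequality, and the inductive step applies the same inequality with $2^{\ell-1}k$ in place of $k$. Next, given $j\geq k\geq 1$, I would let $\ell$ be the smallest non-negative integer with $j\leq 2^\ell k$, so that $2^{\ell-1}k\leq j\leq 2^\ell k$ and
\[
\ell\leq 1+\log_2(j/k)\leq 1+\log_2\bigl(1+(j-k)\bigr),
\]
where the last step uses $k\geq 1$ (so that $j/k\leq 1+(j-k)$). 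Since $w$ is monotonic on $(0,1]$, $w(2^{-j})$ is trapped between $w(2^{-2^{\ell-1}k})$ and $w(2^{-2^\ell k})$, and combining with the iterated inequality gives
\[
c^{\ell-1}\leq \frac{w(2^{-j})}{w(2^{-k})}\leq d^{\ell}\qquad\text{or the reverse,}
\]
depending on which direction $w$ is monotonic.

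Substituting the bound on $\ell$ yields expressions of the form $C(1+j-k)^{\pm\log_2 c}$ and $C(1+j-k)^{\pm\log_2 d}$, and setting $b:=\max\{\log_2 d,\,-\log_2 c,\,0\}\geq 0$ absorbs all four sign/monotonicity subcases into a single uniform estimate. The borderline case $k=0$, in which $w(2^{-k})=w(1)$, reduces to the case $k=1$ by factoring
\[
\frac{w(2^{-j})}{w(1)}=\frac{w(2^{-j})}{w(2^{-1})}\cdot\frac{w(2^{-1})}{w(1)}
\]
and absorbing the fixed positive second factor into $C_1$ and $C_2$. The whole argument is thus a controlled iteration of the defining inequality for admissibility; the only real obstacle is bookkeeping, namely keeping track of the two monotonicity directions and the signs of $\log_2 c$ and $\log_2 d$ while assembling the final constants, but no step is conceptually deep.
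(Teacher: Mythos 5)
Your argument is correct. The paper gives no proof of this proposition at all --- it is quoted verbatim from Moura's \emph{Function spaces of generalised smoothness} (Proposition 1.4 there) --- so there is nothing in the text to compare against; your write-up is a valid self-contained derivation, and it follows the natural route: iterate the defining doubling inequality to control $w$ at the points $2^{-2^{\ell}k}$, then use monotonicity to interpolate to an arbitrary $j$ between consecutive dyadic multiples of $k$, and convert $\ell\lesssim 1+\log_2(1+j-k)$ into a power of $1+j-k$ via $b=\max\{\log_2 d,\,-\log_2 c,\,0\}$. The only points that genuinely need the care you flag are the ones you already handle: the inequality $j/k\le 1+(j-k)$ requires $k\ge 1$, so $k=0$ must be treated separately by factoring through $w(2^{-1})/w(1)$; the case $j=k$ (where your $\ell=0$ and $2^{\ell-1}k$ is not an integer) is trivial since the ratio equals $1$; and the off-by-one in the exponents ($c^{\ell}$ versus $c^{\ell-1}$, depending on the monotonicity direction) only changes the multiplicative constants $C_1,C_2$, not $b$.
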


We shall point out that the class of admissible weights introduced above do not coincide, in general, with those in Definition \ref{X_w}. However, they satisfy similar properties and both classes coincide under some restrictions. 
\begin{lem}
    \cite{Paper1}*{Lemma 2.12} Let $w$ be an admissible weight. Then $w$ satisfies \ref{I} and \ref{II} in Definition \ref{X_w} above. Moreover, condition \ref{III} holds for an admissible weight $w$ if, and only if, $w$ is either non-increasing, or satisfies that for all $t>0$, $w(t)\approx 1$.
\end{lem}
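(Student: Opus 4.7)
The plan is to verify the three defining conditions of Definition \ref{X_w} in turn. For property \ref{II}, I would apply Proposition \ref{comp_weights} with $k=0$ to obtain $w(2^{-j})\leq C_2 w(1)(1+j)^b$ for every $j\in\N$. Combining this with the monotonicity of $w$ on $(0,1]$ (to replace $w(t)$ by $w(2^{-j})$ for $t\in(2^{-j-1},2^{-j}]$, up to the consecutive-scale comparability furnished by Proposition \ref{comp_weights} with $j\mapsto k+1$), one obtains $w(t)\lesssim (1+\log_+1/t)^b$ on $(0,1]$, and since $w$ is constant on $[1,\infty)$, the bound $w(t)\lesssim (1+1/t)^N$ holds globally with $N=b$.

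For property \ref{I}, fix a compact interval $I\subseteq(0,\infty)$ and choose $M\in\N$ with $I\subseteq[2^{-M},2^M]$. I would split according to the size of $s>0$. If $s\geq 2^M$, then both $s$ and $st$ lie in $[1,\infty)$ for every $t\in I$, so $w(st)/w(s)=1$. If $s\in[2^{-2M},2^M]$, both $s$ and $st$ remain inside a fixed compact subset of $(0,\infty)$ on which monotonicity of $w$ forces the values to stay between two positive constants, so the ratio is controlled. If $s<2^{-2M}$, both $s$ and $st$ lie in $(0,1]$; writing $s\in(2^{-k-1},2^{-k}]$ and $st\in(2^{-j-1},2^{-j}]$ with $|j-k|\leq M+1$, monotonicity yields $w(s)\approx w(2^{-k})$ and $w(st)\approx w(2^{-j})$, while Proposition \ref{comp_weights} bounds $w(2^{-j})/w(2^{-k})$ above and below by constants depending only on $M$.

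For property \ref{III}, admissibility forces $w$ to be monotonic on $(0,1]$, hence either non-increasing or non-decreasing there. In the non-increasing case one has $w(t)\geq w(1)>0$ for every $t>0$, so \ref{III} holds automatically. In the non-decreasing case $w(t)\leq w(1)$ on $(0,1]$ and the infimum of $w$ equals $\lim_{t\to 0^+}w(t)$; this limit is strictly positive precisely when $w$ is sandwiched between two positive constants, i.e.\ $w\approx 1$. Combining the two monotonic cases yields the claimed equivalence.

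The main subtlety is the dyadic replacement $w(s)\approx w(2^{-k})$ used in the third regime of \ref{I}, which requires that $w$ not oscillate wildly between consecutive dyadic scales; this is precisely what Proposition \ref{comp_weights} provides with $j=k+1$, after which the whole argument reduces to careful bookkeeping across the three regimes.
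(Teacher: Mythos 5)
The paper does not actually prove this lemma: it is cited verbatim from \cite{Paper1}*{Lemma 2.12}, so there is no in-paper argument to compare yours against. Judged on its own merits, your proof is correct and self-contained; it leans entirely on Proposition~\ref{comp_weights} together with the built-in monotonicity of admissible weights on $(0,1]$, which is a natural way to rebuild the omitted argument.

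A few small remarks on the details you supplied. For~\ref{II}, the chain $w(t)\lesssim(1+\log_+1/t)^b\leq(1+1/t)^b$ does indeed work because $b\geq 0$ in Proposition~\ref{comp_weights} and $\log_+(1/t)\leq 1/t$. In the second regime of~\ref{I}, the phrase ``monotonicity forces the values to stay between two positive constants'' is a touch terse; the point is that on $(0,1]$ a monotonic $(0,\infty)$-valued function is automatically bounded above and below on any interval $[a,1]$ with $a>0$ (the bounds are $w(a)$ and $w(1)$ in some order), and $w$ is constant on $[1,\infty)$, so $w$ is pinched between positive constants on $[2^{-3M},2^{2M}]$. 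In the third regime you should note that $s<2^{-2M}$ forces $k\geq 2M$ and hence $j\geq k-M\geq M\geq 0$, so Proposition~\ref{comp_weights} (which is stated for nonnegative indices) applies; you state $|j-k|\leq M+1$ while the sharp bound is $|j-k|\leq M$, but this is harmless. Finally, the argument for~\ref{III} correctly splits along the monotonicity dichotomy built into Definition~\ref{adm_wei}: in the non-increasing case $\inf w=w(1)>0$ always, and in the non-decreasing case $\inf w=\lim_{t\to 0^+}w(t)$, which is positive precisely when $w$ is sandwiched between positive constants, i.e.\ $w\approx 1$. None of these points constitutes a gap; the proof holds up.
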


\subsection{Spaces of generalised smoothness.}

Let $\varphi_0$ be a positive and radially decreasing Schwartz function, supported in the ball $\lbrace\abs{\xi}\leq 3/2\rbrace$, which is identically one on $\lbrace\abs{\xi}\leq 1\rbrace$. We define then $\varphi(\xi):=\varphi_0(\xi)-\varphi_0(2\xi)$ and $\varphi_j(\xi):=\varphi(2^{-j}\xi)$ for $\xi\in\R^n$ and all integers $j\geq 1$. We notice that $\varphi_j$ is supported in the annulus $\lbrace 2^{j-1}\leq\abs{\xi}\leq 2^{j+1}\rbrace$ for all $j\geq 1$ and it holds that $\sum_{j=0}^\infty\varphi_j(\xi)=1$ for all $\xi\in\R^n$. Such family $\{\varphi_j\}_{j\geq 0}$ forms a resolution of unity. 

\begin{defn}\label{GSTL}
    Let $s\in\R$, $0<p\leq\infty$ and $0<q\leq\infty$. Let $w$ be an admissible weight and let $\lbrace\varphi_j\rbrace_{j=0}^\infty$ be a resolution of unity as above. 
    \begin{itemize}
        \item We define the Besov space, $B_{p,q}^s(\R^n)$, to be the set of all tempered distributions $f$ for which
        \[
            \norm{f}_{B_{p,q}^s(\R^n)}:=\left(\sum_{j=0}^\infty 2^{jsq}\norm{\varphi_j(D)f}_{L^p(\R^n)}^q\right)^{1/q}<\infty,
        \]
        with the usual modification if $p=\infty$ or $q=\infty$.
        \item \cites{cae-mou,mou} If $0<p<\infty$, we define the Triebel-Lizorkin space of generalised smoothness, $F_{p,q}^{s,w}(\R^n)$, to be the set of all tempered distributions $f$ for which
        \[
            \norm{f}_{F_{p,q}^{s,w}(\R^n)}:=\norm{\left(\sum_{j=0}^\infty 2^{jsq}w(2^{-j})^q\abs{\varphi_j(D)f}^q\right)^{1/q}}_{L^p(\R^n)}<\infty,
        \]
        with the usual modification if $q=\infty$.
        \item \cite{Paper2}*{Definition 2.7} Let $\mathcal{D}$ be the set of all dyadic cubes in $\R^n$ and $0<q<\infty$. We define $F_{\infty,q}^{s,w}(\R^n)$ to be the set of all tempered distributions $f$ for which
        \begin{align*}
            \norm{f}_{F_{\infty,q}^{s,w}(\R^n)}&:=\norm{\varphi_0(D)f}_\infty\\
            &\quad+\sup_{\substack{Q\in\mathcal{D} \\ \ell(Q)\leq 1}}\left(\frac{1}{\abs{Q}}\int_Q\sum_{j=-\log_2 \ell(Q)}^\infty 2^{sj q}w(2^{-j})^q\abs{\varphi_j(D)f(x)}^q \dd x\right)^{1/q}
        \end{align*}
        is finite.
    \end{itemize}
\end{defn}

\begin{rem}
    If we consider the admissible weight given by the constant function $w\equiv 1$ then Definition~\ref{GSTL} reduces to the classical Triebel-Lizorkin spaces $F_{p,q}^s(\R^n)$. In addition, it is also known that the identities $F_{\infty,2}^0(\R^n)=\bmo(\R^n)$ and $F_{p,2}^s(\R^n)=L^p_s(\R^n)$ hold for $1<p<\infty$ and $s\in\R$, in the sense of equivalent norms (see for instance \cite{Trie83}*{Section 2.3.5}).
\end{rem}

{The spaces $F_{\infty,q}^{s,w}(\R^n)$ were previously defined by the authors in \cite{Paper2} and, to the best of our knowledge, they have not appeared in the literature before. For the sake of completeness, as the details were not included there, we provide below a proof of the property that the definition of  these spaces is independent on the underlying resolution of unity.
}

The argument used is similar to the idea applied in the case of the classical Triebel-Lizorkin spaces, making use of an appropriate multiplier theorem, which we state as a lemma, due to B.J. Park.
\begin{lem}\label{lemmapark}
    \cite{Park}*{Lemma E} Let $0<q<\infty$ and $\nu>n/\min\{1,q\}-n/2$. Consider the sequence of functions $\{f_j\}_{j=0}^\infty$ and assume that there is a constant $C>0$ such that the Fourier transform of each $f_j$ is supported in the ball $\{\abs{\xi}\leq C\cdot 2^j\}$. If the sequence $\{m_j\}_{j=0}^\infty$ of multipliers satisfy
    \[
        \sup_{j\geq 0}\norm{m_j}_{L^ 2_\nu(\R^n)}<\infty,
    \]
    then we obtain the estimate
    \begin{align*}
        &\sup_{\substack{Q\in\mathcal{D} \\ \ell(Q)\leq 1}}\left(\frac{1}{\abs{Q}}\int_Q\sum_{j=-\log_2 \ell(Q)}^\infty \abs{m_j(D)f_j(x)}^q \dd x\right)^{1/q}\nonumber\\
        &\quad\lesssim\sup_{j\geq 0}\norm{m_j(2^j\cdot)}_{L^2_\nu(\R^n)}\sup_{\substack{Q\in\mathcal{D} \\ \ell(Q)\leq 1}}\left(\frac{1}{\abs{Q}}\int_Q\sum_{j=-\log_2 \ell(Q)}^\infty \abs{f_j(x)}^q \dd x\right)^{1/q}.
    \end{align*}
\end{lem}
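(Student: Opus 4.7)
The lemma is a vector-valued Fourier multiplier theorem adapted to the Carleson-type $F_{\infty,q}^0$ functional on the right-hand side. I would follow the standard two-step strategy for results of this type: first establish a pointwise bound for each $m_j(D)f_j$ in terms of a maximal function of $f_j$, and then transfer this bound to the $F_{\infty,q}^0$ quantity by a suitable vector-valued maximal inequality. Throughout, set $r:=\min\{1,q\}$, so that the hypothesis reads $\nu>n/r-n/2$.

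For the pointwise step, reduce by a dilation $\xi=2^{j}\eta$ to multipliers $\tilde m_j(\eta):=m_j(2^{j}\eta)$ acting on rescaled functions $g_j(y):=f_j(2^{-j}y)$, whose Fourier support lies in a fixed ball $\{\abs{\eta}\leq C\}$. Truncating $\tilde m_j$ by a fixed smooth cutoff $\chi$ equal to one on $\{\abs{\eta}\leq C\}$ does not alter the action on $g_j$, so $\tilde m_j(D)g_j = K_j*g_j$ with $K_j:=(\chi\tilde m_j)^{\vee}$. Because $K_j$ is bandlimited, its membership in the local Hardy space $H^r(\R^n)$ is controlled by $\norm{\chi\tilde m_j}_{L^2_\nu}$ precisely under the hypothesis $\nu>n/r-n/2$ (this is the natural threshold coming from the molecular description of $H^r$). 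Combining this with the Plancherel–Polya–Nikol'ski inequality, which bounds the Peetre maximal function $\sup_{z}\abs{g_j(\,\cdot\,-z)}/(1+\abs{z})^{n/r}$ by $(M\abs{g_j}^{r})^{1/r}$, and undoing the rescaling, yields the pointwise estimate
\begin{equation*}
\abs{m_j(D)f_j(x)}\lesssim \norm{m_j(2^j\cdot)}_{L^2_\nu}\cdot (M\abs{f_j}^{r})^{1/r}(x).
\end{equation*}

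For the second step, raise this bound to the power $q$ and sum over $j\geq-\log_2\ell(Q)$. The problem reduces to controlling
\begin{equation*}
\sup_{\substack{Q\in\mathcal{D}\\ \ell(Q)\leq 1}}\left(\frac{1}{\abs{Q}}\int_Q\sum_{j\geq -\log_2\ell(Q)}(M\abs{f_j}^{r})^{q/r}(x)\,\dd x\right)^{1/q}
\end{equation*}
by the same quantity with $\abs{f_j}$ in place of $(M\abs{f_j}^{r})^{1/r}$. This is the main obstacle, because the $F^0_{\infty,q}$ functional is not a Lebesgue norm and the classical Fefferman–Stein vector-valued maximal inequality does not apply directly. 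The plan is to fix $Q$ and, for each $j\geq -\log_2\ell(Q)$, split $f_j=f_j^{\rm in}+f_j^{\rm out}$ where $f_j^{\rm in}$ is supported in a dilate $cQ$. On $Q$, the contribution of $f_j^{\rm in}$ is handled by the $L^{q/r}$ boundedness of $M$ (note $q/r\geq 1$, with strict inequality except at the endpoint $q\leq 1$ which is treated via a weak-type/truncation trick), bringing the sum back to $\frac{1}{\abs{Q}}\int_{cQ}\sum_j\abs{f_j}^q$. For the tail $f_j^{\rm out}$, one exploits that $f_j$ is frequency-localized at scale $2^j\geq 1/\ell(Q)$ to obtain rapid off-diagonal decay of $M f_j^{\rm out}(x)$ for $x\in Q$, producing a geometrically summable series that is absorbed into the same $F_{\infty,q}^0$ quantity. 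Combining the two contributions and taking supremum in $Q$ yields the claim.
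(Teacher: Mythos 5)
You should first note that the paper does not prove this lemma at all: it is quoted verbatim from Park \cite{Park}*{Lemma E} and used as a black box, so there is no internal argument to compare with. Judged on its own, your sketch has a genuine gap at its first step. The pointwise bound
\[
    \abs{m_j(D)f_j(x)}\lesssim \norm{m_j(2^j\cdot)}_{L^2_\nu(\R^n)}\,\bigl(M\abs{f_j}^{r}\bigr)^{1/r}(x),\qquad r=\min\{1,q\},
\]
via the Peetre maximal function requires the weighted kernel estimate $\int\abs{K_j(y)}(1+2^j\abs{y})^{n/r}\,\dd y<\infty$, and Cauchy--Schwarz converts this into the requirement $\nu>n/r+n/2$ (a more careful decomposition into dyadic annuli still needs $\nu>n/r$). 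At the threshold $\nu>n/r-n/2$ of the lemma the estimate is simply not available: already for $q\geq 1$ the hypothesis reads $\nu>n/2$, which barely places $K_j$ in $L^1(\R^n)$ and gives no radially decreasing integrable majorant, hence no domination by $Mf_j$. Your appeal to the membership $K_j\in H^r(\R^n)$ under $\nu>n/r-n/2$ is a correct statement about the kernel, but it controls quasi-norms of convolutions, not pointwise values, and cannot be combined with the Plancherel--Polya--Nikol'skii inequality in the way you describe. The gain of $n/2$ over the naive threshold is exactly the nontrivial content of Park's lemma; it is obtained by exploiting $L^2$-orthogonality (Plancherel applied to spatially dyadic pieces of the kernel) inside the quasi-norm rather than through any pointwise majorization.

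The second step is also shakier than presented: the sharp spatial truncation $f_j=f_j^{\mathrm{in}}+f_j^{\mathrm{out}}$ destroys the frequency localisation of $f_j$, so band-limitedness does not produce ``rapid off-diagonal decay'' of $Mf_j^{\mathrm{out}}$ on $Q$; one only gets decay of order $\operatorname{dist}(x,\supp f_j^{\mathrm{out}})^{-n}$ against local averages, and summing this over $j\geq-\log_2\ell(Q)$ and reabsorbing it into the $F^0_{\infty,q}$ functional is itself a Fefferman--Stein-type theorem for this Carleson-measure quantity. Since the first step already fails at the stated regularity, the correct course here is either to cite Park's Lemma E as the paper does, or to reproduce his orthogonality argument in full.
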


\begin{prop}\label{ind_res_uni}
    Let $s\in\R$, $0<q<\infty$ and let $w$ be an admissible weight. The space $F_{\infty,q}^{s,w}(\R^n)$ is independent of the chosen resolution of unity defining $\norm{\cdot}_{F_{\infty,q}^{s,w}(\R^n)}$, in the sense that different choices of the resolution of unity give equivalent quasi-norms.
\end{prop}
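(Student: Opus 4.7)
My plan is to reduce the norm equivalence to the multiplier estimate in Lemma~\ref{lemmapark}, together with a band-limited argument for the low-frequency summand. Let $\{\varphi_j\}$ and $\{\psi_j\}$ be two resolutions of unity of the type described before Definition~\ref{GSTL}, and let $\norm{\cdot}_\varphi$ and $\norm{\cdot}_\psi$ denote the quantities they produce. By symmetry it suffices to show $\norm{f}_\psi \lesssim \norm{f}_\varphi$. Both families are Fourier-supported in dyadic annuli, so there is a fixed $N_0 \in \N$ such that $\psi_j \varphi_k \equiv 0$ whenever $|j - k| > N_0$, and therefore
\[
\psi_j(D) f = \sum_{|\ell|\leq N_0} \psi_j(D)\varphi_{j+\ell}(D) f
\]
(with $\varphi_k \equiv 0$ for $k < 0$).

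For the Carleson-type term in $\norm{f}_\psi$, I would fix $|\ell|\leq N_0$ and set $H_j^{(\ell)} := 2^{sj} w(2^{-j})\varphi_{j+\ell}(D) f$, which is Fourier-supported in $\{|\xi|\leq C\, 2^j\}$. Applying Lemma~\ref{lemmapark} to the sequence $\{H_j^{(\ell)}\}$ with multipliers $m_j(\xi) := \psi_j(\xi)$ yields the desired estimate, since $m_j(2^j\cdot) = \psi(\cdot)$ for $j\geq 1$ (and $\psi_0$ for $j=0$) is a fixed Schwartz function and so is uniformly bounded in $L^2_\nu$. Reindexing the resulting sum via $k = j+\ell$ and invoking Proposition~\ref{comp_weights} (which, because $|\ell|\leq N_0$, gives $w(2^{-(k-\ell)})\approx w(2^{-k})$ uniformly in $k$) turns it into the Carleson sum over $\varphi_k(D)f$ with the prescribed weights $2^{skq} w(2^{-k})^q$.

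A mild subtlety is that the reindexing shifts the lower summation limit from $-\log_2 \ell(Q)$ to $-\log_2 \ell(Q) + \ell$. When $\ell<0$, the up to $N_0$ extra low-index terms are absorbed by moving $Q$ to its dyadic ancestor $Q'\supset Q$ of side $\ell(Q') = \min\{2^{|\ell|}\ell(Q),1\}$, which produces only a harmless volume factor $|Q'|/|Q|\leq 2^{N_0 n}$, uniform in $Q$.

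For the low-frequency term $\norm{\psi_0(D) f}_\infty$, the bounded support of $\psi_0$ forces $\psi_0(D) f = \sum_{k=0}^{k_0} \psi_0(D)\varphi_k(D) f$ for some fixed $k_0$. The $k=0$ summand is bounded by $\norm{\varphi_0(D) f}_\infty\leq\norm{f}_\varphi$, and for $1\leq k\leq k_0$ the function $\psi_0(D)\varphi_k(D) f$ is band-limited to a fixed ball, so a Plancherel--Polya inequality controls its $L^\infty$ norm by a local $L^q$-average of $|\varphi_k(D) f|$, which in turn is dominated by extracting the $j=k$ term from the Carleson sum on any unit dyadic cube (where $-\log_2\ell(Q)=0$). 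The main technical obstacle is precisely the bookkeeping in the shift-and-reindex step of the Carleson piece: one must simultaneously use the admissibility of $w$, the finiteness of the shift $|\ell|\leq N_0$, and the ancestor-cube comparison in a way that remains uniform in $Q$ and preserves the $L^\infty$-in-$Q$ structure of the seminorm.
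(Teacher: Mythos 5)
Your proof is correct and follows essentially the same route as the paper: expand one resolution of unity against the other into finitely many neighbouring pieces, apply Park's multiplier estimate (Lemma~\ref{lemmapark}) with uniformly bounded rescaled multipliers, reindex and use Proposition~\ref{comp_weights} for the weight, and treat the low-frequency term separately. In fact you are a bit more explicit than the paper about two small points, namely the ancestor-cube absorption when the reindexing lowers the starting index of the Carleson sum (the paper's estimate \eqref{equ_ind_3} passes over this silently in the $r=-1$ case) and the Plancherel--Polya/Peetre-type step needed to pass from $\norm{\psi_1(D)f}_\infty$ to the Carleson average.
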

\begin{proof}
    We follow the approach in \cite{Trie83}*{Proposition 2.3.2/1} for the classical Triebel-Lizorkin spaces, using Lemma \ref{lemmapark} in this case as a multiplier theorem.
    
    Set $f\in F_{\infty,q}^{s,w}(\R^n)$ and let $\{\varphi_j\}_{j\geq 0}$ and $\{\psi\}_{j\geq 0}$ be two resolutions of unity as in Definition \ref{GSTL}. If we define $\psi_{-1}\equiv 0$ then we can write
    \begin{equation}\label{varphi_as_psi}
        \varphi_j=\varphi_j\sum_{r=0}^\infty\psi_r=\sum_{r=-1}^1\psi_{j+r}\varphi_j
    \end{equation}
    for any non-negative integer $j$. In particular, we notice that, given $j\geq 0$, the Fourier transform of $\psi_{j+r}(D)f$ is supported in the ball $\{\abs{\xi}\leq 4\cdot 2^j\}$ for any $r\in\{-1,0,1\}$. Hence, given a cube $Q\in\mathcal{D}$ with $\ell(Q)\leq 1$ and a positive integer $\nu>n/\min\{1,q\}-n/2$, we use \eqref{varphi_as_psi} and Lemma \ref{lemmapark} to see that
    \begin{align}\label{equ_ind_1}
        &\left(\frac{1}{\abs{Q}}\int_Q\sum_{j=-\log_2 \ell(Q)}^\infty 2^{sj q}w(2^{-j})^q\abs{\varphi_j(D)f(x)}^q \dd x\right)^{1/q}\nonumber\\
        &\quad\lesssim\sum_{r=-1}^1\left(\frac{1}{\abs{Q}}\int_Q\sum_{j=-\log_2 \ell(Q)}^\infty 2^{sj q}w(2^{-j})^q\abs{\varphi_j(D)[\psi_{j+r}(D)f](x)}^q \dd x\right)^{1/q}\nonumber\\
        &\quad\lesssim\sum_{r=-1}^1\sup_{j\geq 0}\norm{\varphi_j(2^{j+r}\cdot)}_{L^2_\nu(\R^n)}\sup_{\substack{Q\in\mathcal{D} \\ \ell(Q)\leq 1}}\left(\frac{1}{\abs{Q}}\int_Q\sum_{j=-\log_2 \ell(Q)}^\infty 2^{sj q}w(2^{-j})^q\abs{\psi_{j+r}(D)f(x)}^q \dd x\right)^{1/q}.
    \end{align}
    We observe that it is possible to find a constant $C>0$ such that
    \begin{equation}\label{equ_ind_2}
        \sup_{j\geq 0}\norm{\varphi_j(2^{j+r}\cdot)}_{L^2_\nu(\R^n)}\leq C.
    \end{equation}
    Indeed, it holds that
    \[
        \norm{\varphi_j(2^{j+r}\cdot)}_{L^2_\nu(\R^n)}\lesssim\left(\sum_{\abs{\alpha}\leq \nu}\norm{\partial^\alpha\left(\varphi_j(2^{j+r}\cdot)\right)}_{L^2(\R^n)}^2\right)^{1/2}
    \]
    In addition, the right hand side of the last inequality can be seen to be uniformly bounded on $j$ by using the estimate $\abs{\partial^\alpha\varphi_j(x)}\lesssim 2^{-j\abs{\alpha}}$ for any $x\in\R^n$, consequence of the Schwartz condition of $\varphi_j$.
    
    Next, changing variables, we observe that for $r\in\{-1,0,1\}$ it holds
    \begin{align}\label{equ_ind_3}
       &\sup_{\substack{Q\in\mathcal{D} \\ \ell(Q)\leq 1}}\left(\frac{1}{\abs{Q}}\int_Q\sum_{j=-\log_2 \ell(Q)}^\infty 2^{sj q}w(2^{-j})^q\abs{\psi_{j+r}(D)f(x)}^q \dd x\right)^{1/q}\nonumber\\
       &\quad=\sup_{\substack{Q\in\mathcal{D} \\ \ell(Q)\leq 1}}\left(\frac{1}{\abs{Q}}\int_Q\sum_{j=-\log_2 \ell(Q)+r}^\infty 2^{s(j-r) q}w(2^{-j+r})^q\abs{\psi_j(D)f(x)}^q \dd x\right)^{1/q}\nonumber\\ &\quad\lesssim\sup_{\substack{Q\in\mathcal{D} \\ \ell(Q)\leq 1}}\left(\frac{1}{\abs{Q}}\int_Q\sum_{j=-\log_2 \ell(Q)}^\infty 2^{sj q}w(2^{-j})^q\abs{\psi_j(D)f(x)}^q \dd x\right)^{1/q}.
    \end{align}
     The last inequality is immediate for $r=0$. In the case where $r=1$ the last inequality is obtained after applying Proposition \ref{comp_weights} to get the existence of a constant $b\geq 0$ for which the estimate $w(2^{-j+1})\lesssim 2^{-b}w(2^{-j})$ holds, while the case $r=-1$ is obtained similarly but using the estimate $w(2^{-j-1})\lesssim 2^bw(2^{-j})$, which also follows from Proposition \ref{comp_weights}.
    
    Combining the inequalities in \eqref{equ_ind_1}, \eqref{equ_ind_2} and \eqref{equ_ind_3} we obtain that
    \begin{align}\label{equ_ind_fin_1}
        &\sup_{\substack{Q\in\mathcal{D} \\ \ell(Q)\leq 1}}\left(\frac{1}{\abs{Q}}\int_Q\sum_{j=-\log_2 \ell(Q)}^\infty 2^{sj q}w(2^{-j})^q\abs{\varphi_j(D)f(x)}^q \dd x\right)^{1/q}\\
        &\nonumber\quad\lesssim \sup_{\substack{Q\in\mathcal{D} \\ \ell(Q)\leq 1}}\left(\frac{1}{\abs{Q}}\int_Q\sum_{j=-\log_2 \ell(Q)}^\infty 2^{sj q}w(2^{-j})^q\abs{\psi_j(D)f(x)}^q \dd x\right)^{1/q}.
    \end{align}
    Furthermore, using \eqref{varphi_as_psi} for $j=0$, the Minkowski inequality and the Lebesgue Differentiation Theorem yield
    \begin{align}\label{equ_ind_fin_2}
        \norm{\varphi_0(D)f}_\infty&\leq\norm{\varphi_0(D)[\psi_0(D)f]}_\infty+\norm{\varphi_0(D)[\psi_1(D)f]}_\infty\nonumber\\
        &\lesssim\norm{\psi_0(D)f}_\infty+\norm{\psi_1(D)f}_\infty\nonumber\\
        &\lesssim\norm{\psi_0(D)f}_\infty+\sup_{\substack{Q\in\mathcal{D} \\ \ell(Q)\leq 1}}\left(\frac{1}{\abs{Q}}\int_Q\abs{\psi_1(D)f(x)}^q\dd x\right)^{1/q}\nonumber\\
        &\lesssim\norm{\psi_0(D)f}_\infty+\sup_{\substack{Q\in\mathcal{D} \\ \ell(Q)\leq 1}}\left(\frac{1}{\abs{Q}}\int_Q\sum_{j=-\log_2 \ell(Q)}^\infty 2^{sj q}w(2^{-j})^q\abs{\psi_j(D)f(x)}^q \dd x\right)^{1/q}.
    \end{align}
    Joining \eqref{equ_ind_fin_1} and \eqref{equ_ind_fin_2} we see that the norm $\norm{\cdot}_{F_{\infty,q}^{s,w}(\R^n)}$ associated to the family $\{\varphi_j\}_{j=0}^\infty$ is bounded above by a constant times the norm $\norm{\cdot}_{F_{\infty,q}^{s,w}(\R^n)}$ associated to the family $\{\psi_j\}_{j=0}^\infty$. The same argument, interchanging the roles of $\varphi_j$ and $\psi_j$, gives the converse inequality.
\end{proof}

We shall recall some classical embeddings for Besov and Triebel-Lizorkin spaces, which will be used to prove Theorem \ref{main_thm}.
\begin{prop}\label{embed}
    The following embeddings hold:
    \begin{enumerate}[label=\Roman*)]
        \item\label{BesinTri} Let $0<p<\infty$, $0<q\leq\infty$ and $s\in\R$. Then
        \[
            B_{p,\infty}^{s+n/p}(\R^n)\hookrightarrow F_{\infty,q}^s(\R^n).
        \]
         \item\label{TriinBes} Let $0<q,p_1,q_1\leq\infty$, $0<p<p_1$ and $s,s_1\in\R$. Assume in addition that $s-n/p=s_1-n/p_1$. The embedding
        \[
            F^s_{p,q}(\R^n)\hookrightarrow B^{s_1}_{p_1,q_1}(\R^n)
        \]
        holds if, and only if, $p\leq q_1$.
    \end{enumerate}
\end{prop}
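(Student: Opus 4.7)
Both parts are classical and my plan is to sketch \ref{BesinTri} by direct computation and reduce \ref{TriinBes} to the Franke embedding together with a sharp counterexample. For \ref{BesinTri}, I would fix $f\in B_{p,\infty}^{s+n/p}(\R^n)$ and a dyadic cube $Q$ with $\ell(Q)=2^{-k}$, $k\geq 0$, and estimate the local average $\frac{1}{\abs{Q}}\int_Q |\varphi_j(D)f|^q\,\dd x$ for each $j\geq k$ by combining Bernstein's inequality for band-limited functions with the elementary Besov bound $\norm{\varphi_j(D)f}_p\leq 2^{-j(s+n/p)}\norm{f}_{B_{p,\infty}^{s+n/p}(\R^n)}$. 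When $q\leq p$, Jensen yields $\frac{1}{\abs{Q}}\int_Q|\varphi_j(D)f|^q\leq\abs{Q}^{-q/p}\norm{\varphi_j(D)f}_p^q$; when $q>p$, the interpolation $\frac{1}{\abs{Q}}\int_Q|\varphi_j(D)f|^q\leq\norm{\varphi_j(D)f}_\infty^{q-p}\norm{\varphi_j(D)f}_p^p/\abs{Q}$ together with Bernstein's inequality $\norm{\varphi_j(D)f}_\infty\lesssim 2^{jn/p}\norm{\varphi_j(D)f}_p$ gives an estimate of the same type. Substituting the Besov bound and summing over $j\geq k$ converts the dependence on $\abs{Q}$ into a geometric series in which the factor $\abs{Q}\inv=2^{kn}$ is absorbed, while the low-frequency term $\norm{\varphi_0(D)f}_\infty$ is controlled by Bernstein applied to the compactly Fourier-supported $\varphi_0(D)f$.

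For \ref{TriinBes}, I would deduce the sufficiency ($p\leq q_1$) from the Franke embedding $F^{s}_{p,q}(\R^n)\hookrightarrow B^{s_1}_{p_1,p}(\R^n)$, valid for every $q\in(0,\infty]$ under the Sobolev balance $s-n/p=s_1-n/p_1$ with $p<p_1$, combined with the elementary monotonicity $B^{s_1}_{p_1,p}(\R^n)\hookrightarrow B^{s_1}_{p_1,q_1}(\R^n)$ for $q_1\geq p$. For the converse, assuming $q_1<p$, I would construct a lacunary test family of the form $f_N=\sum_{j=1}^{N}2^{-j(s+n/p)}\eta(2^j(\cdot-x_j))$ with $\eta\in\SW(\R^n)$ whose Fourier transform is supported in a fixed annulus and with well-separated centres $x_j$, so that $\norm{f_N}_{F^{s}_{p,q}(\R^n)}$ remains uniformly bounded while $\norm{f_N}_{B^{s_1}_{p_1,q_1}(\R^n)}\to\infty$ as $N\to\infty$.

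The main obstacle will be the Franke embedding itself, whose cleanest self-contained derivation passes through the Jawerth embedding $F^{s}_{p,\infty}(\R^n)\hookrightarrow F^{s_1}_{p_1,q_1}(\R^n)$ and a real-interpolation or duality step. Given the auxiliary role of Proposition~\ref{embed} in the paper, the most efficient course is presumably to invoke the corresponding statements in \cite{Runst_Sickel} or \cite{Trie83} for \ref{BesinTri} and for the sufficiency half of \ref{TriinBes}, and to supply the lacunary construction only to justify the sharpness part.
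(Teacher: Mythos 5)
The paper's proof of this proposition is a bare citation: part \ref{BesinTri} to Marschall and part \ref{TriinBes} to Sickel--Triebel. Your final paragraph arrives at exactly that conclusion, but the bulk of your proposal sketches a self-contained derivation, which is a genuinely different and more informative route. Your argument for part \ref{BesinTri} (localising to dyadic cubes, Jensen in the case $q\leq p$ or $L^p$--$L^\infty$ interpolation with Bernstein in the case $q>p$, then a geometric sum over $j\geq k$ that absorbs the factor $\abs{Q}^{-1}=2^{kn}$) is correct. Your reduction of the sufficiency in part \ref{TriinBes} to the Franke embedding $F^{s}_{p,q}(\R^n)\hookrightarrow B^{s_1}_{p_1,p}(\R^n)$ followed by the monotonicity $\ell^p\hookrightarrow\ell^{q_1}$ for $q_1\geq p$ is the standard argument. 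What the self-contained sketch buys is transparency about where the index conditions enter; what it costs is length, and ultimately still requires establishing (or citing) the Franke embedding.

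One concrete slip: the coefficients $2^{-j(s+n/p)}$ in your lacunary family $f_N=\sum_{j=1}^{N}2^{-j(s+n/p)}\eta(2^j(\cdot-x_j))$ do not produce the desired blow-up. With $\widehat\eta$ supported in a fixed annulus and well-separated centres $x_j$, a direct computation gives
\[
\norm{f_N}_{F^{s}_{p,q}(\R^n)}\approx\Bigl(\sum_{j=1}^{N}\abs{b_j}^p\Bigr)^{1/p},
\qquad
\norm{f_N}_{B^{s_1}_{p_1,q_1}(\R^n)}\approx\Bigl(\sum_{j=1}^{N}\abs{b_j}^{q_1}\Bigr)^{1/q_1},
\]
where $b_j:=a_j\,2^{j(s-n/p)}$ and $a_j$ is the coefficient of the $j$-th bump. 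Your choice $a_j=2^{-j(s+n/p)}$ gives $b_j=2^{-2jn/p}$, so both norms stay uniformly bounded in $N$ and no contradiction arises. What you want is a sequence $(b_j)\in\ell^p\setminus\ell^{q_1}$, for instance $b_j=j^{-1/q_1}$, i.e.\ $a_j\approx 2^{-j(s-n/p)}\,j^{-1/q_1}$, which works precisely when $q_1<p$ (and $q_1<\infty$; for $q_1=\infty$ the condition $p\leq q_1$ is vacuous and no counterexample is needed). With that correction the sharpness part of your proposal goes through.
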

\begin{proof}
    The first statement can be found in \cite{Marschall}*{Lemma 16} while the second one is shown in \cite{Sickel_Triebel}*{Theorem 3.2.1}.
\end{proof}

We shall point out that admissible weights might not be regular enough for some purposes, so it is useful to introduce a regularised version of them.
\begin{defn}\label{Regularised_weight}
 \cite{cae-mou} Let $w$ be an admissible weight, and let  $(\varphi_j)_{j\geq 0}$ be as in Definition \ref{GSTL}. We say that the function 
\begin{equation}\label{log_symbol}
    \Rw(\xi)=\sum_{j=0}^\infty w(2^{-j})\varphi_j(\xi),\quad \xi\in\R^n,
\end{equation}
 is the regularisation of $w$ (associated to the resolution of unity $(\varphi_j)_{j\geq 0}$).
\end{defn}

\begin{prop}\label{cae-mou-lifting}
    \cite{cae-mou}*{Lemma 3.1} Let $w$ be an admissible weight. The functions $\Rw$ and $1/\Rw$ are smooth on $\R^n$ and they satisfy the inequalities
        \begin{equation}\label{ecu1sym}
            \abs{(\partial^\alpha\Rw)(\xi)}\lesssim w(1/\langle\xi\rangle)\langle\xi\rangle^{-\abs{\alpha}}
        \end{equation}
        and
        \begin{equation}\label{ecu2sym}
            \abs{\left(\partial^\alpha\left(\frac{1}{\Rw}\right)\right)(\xi)}\lesssim \frac{1}{w(1/\langle\xi\rangle)}\langle\xi\rangle^{-\abs{\alpha}}
        \end{equation}
        for any multi-index $\alpha\in\N^n$ and any $\xi\in \R^n$.
\end{prop}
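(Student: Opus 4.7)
\medskip

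The plan is to exploit that the sum defining $\Rw$ is locally finite, together with the scaling behaviour of $\varphi_j$ and the doubling property \eqref{comparable_values_w} for $w$.

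First, I would establish smoothness of $\Rw$. Since $\varphi_j(\xi)=\varphi(2^{-j}\xi)$ for $j\geq 1$ is supported in the annulus $\{2^{j-1}\leq\abs{\xi}\leq 2^{j+1}\}$, for each fixed $\xi\in\R^n$ at most three terms in \eqref{log_symbol} are non-zero in a neighbourhood of $\xi$. Hence $\Rw$ is locally a finite sum of Schwartz functions and therefore smooth. To prove \eqref{ecu1sym}, I would fix $\xi\in\R^n$ and let $k\geq 0$ be the unique integer with $\langle\xi\rangle\approx 2^k$; then the only indices $j$ for which $\partial^\alpha\varphi_j(\xi)\neq 0$ satisfy $\abs{j-k}\leq 1$, say. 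For those $j$, the chain rule gives $\abs{\partial^\alpha\varphi_j(\xi)}=2^{-j\abs{\alpha}}\abs{(\partial^\alpha\varphi)(2^{-j}\xi)}\lesssim 2^{-j\abs{\alpha}}\approx\langle\xi\rangle^{-\abs{\alpha}}$, and by \eqref{comparable_values_w} one has $w(2^{-j})\approx w(2^{-k})\approx w(1/\langle\xi\rangle)$. Summing the (at most three) non-zero terms yields \eqref{ecu1sym}.

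To handle $1/\Rw$, I would first show the pointwise lower bound $\Rw(\xi)\gtrsim w(1/\langle\xi\rangle)$. Since $\varphi_j\geq 0$ and $\sum_j\varphi_j\equiv 1$, at least one $\varphi_j(\xi)$ is $\geq 1/3$, and that index satisfies $2^{-j}\approx 1/\langle\xi\rangle$, so again by \eqref{comparable_values_w} we conclude $\Rw(\xi)\geq w(2^{-j})\varphi_j(\xi)\gtrsim w(1/\langle\xi\rangle)$. In particular $1/\Rw$ is well-defined and smooth. For the derivative estimates, I would apply Faà di Bruno (or induction on $\abs{\alpha}$) to write
\[
    \partial^\alpha\brkt{\frac{1}{\Rw}}(\xi)=\sum_{k=1}^{\abs{\alpha}}\frac{c_{\alpha,k}}{\Rw(\xi)^{k+1}}\sum_{\substack{\beta_1+\cdots+\beta_k=\alpha \\ \abs{\beta_i}\geq 1}}\prod_{i=1}^k \partial^{\beta_i}\Rw(\xi),
\]
with constants $c_{\alpha,k}\in\R$. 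Inserting the lower bound on $\Rw$ and the estimate \eqref{ecu1sym} already proved, each term is bounded by
\[
    w(1/\langle\xi\rangle)^{-(k+1)}\cdot \prod_{i=1}^k w(1/\langle\xi\rangle)\langle\xi\rangle^{-\abs{\beta_i}}= w(1/\langle\xi\rangle)^{-1}\langle\xi\rangle^{-\abs{\alpha}},
\]
yielding \eqref{ecu2sym}.

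The only point requiring some care is the lower bound $\Rw(\xi)\gtrsim w(1/\langle\xi\rangle)$, since $w$ need not be monotone in general; this is where we use that the finitely many values $w(2^{-j})$ relevant at $\xi$ are mutually comparable by \eqref{comparable_values_w}, and that at least one $\varphi_j(\xi)$ is bounded away from zero on the annulus corresponding to $\langle\xi\rangle$. Once this is in place, the rest is a routine combinatorial bookkeeping via Faà di Bruno.
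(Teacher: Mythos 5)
The paper does not supply its own proof here: it simply cites \cite{cae-mou}*{Lemma 3.1}, so there is no in-text argument to measure your proposal against. Your reconstruction is correct and follows the standard route used in that reference: local finiteness of the sum \eqref{log_symbol} plus the scaling $\partial^\alpha\varphi_j=2^{-j\abs{\alpha}}(\partial^\alpha\varphi)(2^{-j}\cdot)$ gives smoothness and \eqref{ecu1sym}; a pigeonhole on the partition of unity combined with the comparability of $w(2^{-j})$ over bounded ranges of $j$ gives the pointwise lower bound $\Rw(\xi)\gtrsim w(1/\langle\xi\rangle)$; and Fa\`a di Bruno (or an equivalent induction) then yields \eqref{ecu2sym}. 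One small remark: you justify $w(2^{-j})\approx w(2^{-k})$ via \eqref{comparable_values_w}, which in the paper is stated for weights satisfying properties \ref{I}--\ref{III} of Definition~\ref{X_w}; for admissible weights (Definition~\ref{adm_wei}) it is cleaner to invoke Proposition~\ref{comp_weights} directly, which gives the needed comparability for $\abs{j-k}$ bounded. This does not affect correctness, since property \ref{I} holds for admissible weights and is all that \eqref{comparable_values_w} really uses, but citing Proposition~\ref{comp_weights} would be more self-contained.
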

\begin{rem}\label{reg_equiv_weight}
    Using \eqref{log_symbol} and the fact that $w(1/\abs{\xi})=w(1)$ for all $\abs{\xi}\leq 1$ yields the equivalence
    $\Rw(\xi)\approx w(1/\abs{\xi})\approx w(1/\langle\xi\rangle)$ for any $\xi\neq 0$. This motivates the terminology of regularisation, as $\Rw$ is smooth and also essentially encodes all the pointwise information of $w$.
\end{rem}

Triebel-Lizorkin spaces of generalised smoothness, $F_{p,q}^{s,w}(\R^n)$, satisfy a lifting property involving the underlying weight function. This property was shown by A. Caetano and S. Moura for the case $0<p<\infty$ and we provide the details on the proof for the missing case $p=\infty$.
\begin{prop}\label{Lifting_for_infty}
    Let either $0<p<\infty$ and $0<q\leq \infty$, or $p=\infty$ and $0<q<\infty$ and $s\in\R$. Let $w$ be an admissible weight and denote by $\Rw$ its regularisation given by \eqref{log_symbol}. It holds that
    \[
        \norm{f}_{F_{p,q}^{s,w}(\R^n)}\approx\norm{\Rw(D)f}_{F_{p,q}^s(\R^n)}
    \]
    for all $f\in F_{p,q}^{s,w}(\R^n)$.
\end{prop}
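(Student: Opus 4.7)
The plan is to follow the same strategy used in the proof of Proposition \ref{ind_res_uni}, namely to apply Lemma \ref{lemmapark} as a multiplier theorem, now combined with the sharp derivative estimates for $\Rw$ and $1/\Rw$ provided by Proposition \ref{cae-mou-lifting}. The case $0<p<\infty$ is already contained in \cite{cae-mou}, so only the case $p=\infty$ needs to be addressed. Throughout the argument, set $\tilde{\varphi}_j := \varphi_{j-1}+\varphi_j+\varphi_{j+1}$ (with $\varphi_{-1}\equiv 0$), so that $\tilde{\varphi}_j\equiv 1$ on $\supp\varphi_j$ and consequently $\varphi_j\Rw = (\varphi_j\Rw)\tilde{\varphi}_j$.

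To establish $\norm{\Rw(D)f}_{F_{\infty,q}^s(\R^n)}\lesssim \norm{f}_{F_{\infty,q}^{s,w}(\R^n)}$, the idea is to define
\[
    m_j(\xi):=\frac{\varphi_j(\xi)\Rw(\xi)}{w(2^{-j})},\qquad f_j:=2^{sj}w(2^{-j})\tilde{\varphi}_j(D)f,
\]
so that $2^{sj}\varphi_j(D)[\Rw(D)f]=m_j(D)f_j$ and the Fourier support of each $f_j$ lies in $\{\abs{\xi}\le 2^{j+2}\}$, as required by Lemma \ref{lemmapark}. The estimate \eqref{ecu1sym} together with Remark \ref{reg_equiv_weight} yield $\Rw(2^j\xi)\approx w(2^{-j})$ and $\abs{\partial^\alpha[\Rw(2^j\cdot)](\xi)}\lesssim w(2^{-j})$ uniformly in $j\ge 1$ on $\supp\varphi$ for every $\abs{\alpha}\le\nu$, which produces $\sup_{j\geq 0}\norm{m_j(2^j\cdot)}_{L^2_\nu(\R^n)}\lesssim 1$. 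Then Lemma \ref{lemmapark} reduces matters to controlling the localised square function of $(f_j)$, and expanding $\tilde{\varphi}_j$ and invoking Proposition \ref{comp_weights} — exactly as in \eqref{equ_ind_3} — bounds it by $\norm{f}_{F_{\infty,q}^{s,w}(\R^n)}$. The low-frequency term $\norm{\varphi_0(D)\Rw(D)f}_\infty$ is handled analogously to \eqref{equ_ind_fin_2}, using that $\Rw$ is $C^\infty$ and comparable to $w(1)$ on $\supp\varphi_0$.

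The reverse inequality proceeds by the symmetric argument. Starting from $\varphi_j(D)f = (\varphi_j/\Rw)(D)[\tilde{\varphi}_j(D)\Rw(D)f]$, define
\[
    n_j(\xi):=\frac{w(2^{-j})\varphi_j(\xi)}{\Rw(\xi)},\qquad g_j:=2^{sj}\tilde{\varphi}_j(D)\Rw(D)f,
\]
so that $2^{sj}w(2^{-j})\varphi_j(D)f = n_j(D)g_j$; this time the derivative estimate \eqref{ecu2sym} on $1/\Rw$, again combined with Remark \ref{reg_equiv_weight}, gives $\sup_{j\geq 0}\norm{n_j(2^j\cdot)}_{L^2_\nu(\R^n)}\lesssim 1$. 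A second application of Lemma \ref{lemmapark} then transfers the weighted localised square function of $f$ to that of $\Rw(D)f\in F_{\infty,q}^s(\R^n)$, and the low-frequency piece is treated exactly as before.

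The main obstacle is the careful bookkeeping of the factor $w(2^{-j})$: the multipliers $m_j$ and $n_j$ must be normalised so that after the scaling $\xi\mapsto 2^j\xi$ the weight factors cancel, leaving symbols whose $L^2_\nu(\R^n)$-norms are uniformly bounded in $j$. This is precisely where the sharp pointwise estimates of Proposition \ref{cae-mou-lifting} for $\partial^\alpha\Rw$ and $\partial^\alpha(1/\Rw)$, phrased at the frequency scale $\abs{\xi}\approx 2^j$, play the decisive role.
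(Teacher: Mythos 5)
Your proposal is correct and follows essentially the same route as the paper: both reduce the $p=\infty$ case to Park's Lemma \ref{lemmapark} with multipliers built from $\Rw$ (resp. $1/\Rw$) normalised by $w(2^{-j})$, bound the rescaled symbols uniformly via \eqref{ecu1sym}--\eqref{ecu2sym} and \eqref{comparable_values_w}, and treat the $j=0$ block separately. The only cosmetic difference is that you place the narrow cutoff $\varphi_j$ in the multiplier and the wide cutoff $\tilde{\varphi}_j$ on the function (forcing the extra index-shift step as in \eqref{equ_ind_3}), whereas the paper puts a wide cutoff $M(2^{-j}\cdot)$ in the multiplier so that $f_j=2^{sj}w(2^{-j})\varphi_j(D)f$ directly reproduces the $F^{s,w}_{\infty,q}$ square function.
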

\begin{proof}
    The proof of the case $0<p<\infty$ can be found in \cite{cae-mou}*{Proposition 3.2}. So we reduce ourselves to the case $p=\infty$. 
    
    We shall prove first the estimate
    \begin{align}\label{sup_part_ineq}
        &\sup_{\substack{Q\in\mathcal{D} \\ \ell(Q)\leq 1}}\left(\frac{1}{\abs{Q}}\int_Q\sum_{j=-\log_2 \ell(Q)}^\infty \abs{2^{sj}\varphi_j(D)[\Rw(D)f](x)}^q \dd x\right)^{1/q}\nonumber\\\
        &\quad\lesssim\sup_{\substack{Q\in\mathcal{D}\\ \ell(Q)\leq 1}}\left(\frac{1}{\abs{Q}}\int_Q\sum_{j=-\log_2 \ell(Q)}^\infty 2^{sjq}w(2^{-j})^q\abs{\varphi_j(D)f(x)}^q \dd x\right)^{1/q}.
    \end{align}
    Indeed, let us consider a Schwartz function $M$ supported in the ring $\{1/4\leq\abs{\xi}\leq 4\}$ which is identically one on $\{1/2\leq\abs{\xi}\leq 2\}$. Then we can write
    \begin{equation}\label{varphi_as_m}
        2^{sj}\varphi_j(D)[\Rw(D)f]=m_j(D)[2^{sj}w(2^{-j})\varphi_j(D)f],
    \end{equation}
    where $m_j(\xi):=w(2^{-j})^{-1}\Rw(\xi)M(2^{-j}\xi)$. Hence, given a positive integer $\nu>n/\min\{1,q\}-n/2$,  Lemma \ref{lemmapark} yields
    \begin{align}\label{apply_mult_thm}
        &\sup_{\substack{Q\in\mathcal{D} \\ \ell(Q)\leq 1}}\left(\frac{1}{\abs{Q}}\int_Q\sum_{j=-\log_2 \ell(Q)}^\infty \abs{m_j(D)[2^{sj}w(2^{-j})\varphi_j(D)f](x)}^q \dd x\right)^{1/q}\nonumber\\   &\quad\lesssim\sup_{j\geq 0}\norm{m_j(2^j\cdot)}_{L^2_\nu(\R^n)}\sup_{\substack{Q\in\mathcal{D} \\ \ell(Q)\leq 1}}\left(\frac{1}{\abs{Q}}\int_Q\sum_{j=-\log_2 \ell(Q)}^\infty \abs{2^{sj}w(2^{-j})\varphi_j(D)f(x)}^q \dd x\right)^{1/q}.
    \end{align}
    The Leibniz rule, the property in \eqref{ecu1sym}, the support condition of $M$ and \eqref{comparable_values_w}, yield
    \begin{align*}
        \abs{\partial^\alpha( m_j(2^j\cdot))(\xi)}&\lesssim \frac{2^{j\abs{\alpha}}}{w(2^{-j})}\sum_{\alpha_1+\alpha_2=\alpha}\abs{\partial^{\alpha_1}\Rw(2^j\xi)}2^{-j\abs{\alpha_2}}\abs{\partial^{\alpha_2}M(\xi)}\\
        &\lesssim\frac{2^{j\abs{\alpha}}}{w(2^{-j})}\sum_{\alpha_1+\alpha_2=\alpha}w(2^{-j})2^{-j\abs{\alpha_1}-j\abs{\alpha_2}}\abs{\partial^{\alpha_2}M(\xi)}
    \end{align*}
    for any multi-index $\alpha\in\N^n$. Using the previous estimate, we see that
    \begin{equation}\label{sup_mj_finite}
        \sup_{j\geq 0}\norm{m_j(2^j\cdot)}_{L^2_\nu(\R^n)}\lesssim\left(\sum_{\abs{\alpha}\leq \nu}\norm{\partial^\alpha\left(m_j(2^{j}\cdot)\right)}_{L^2(\R^n)}^2\right)^{1/2}\lesssim 1.
    \end{equation}
    The inequality in \eqref{sup_part_ineq} is obtained then by combining the estimates in \eqref{varphi_as_m}, \eqref{apply_mult_thm} and \eqref{sup_mj_finite}.
    
    In addition we have that
    \begin{equation}\label{Term_j=0}
        \norm{\varphi_0(D)[\Rw(D)f]}_\infty\lesssim\norm{\varphi_0(D)f}_\infty.
    \end{equation}
    Indeed, let us consider a Schwartz function $\chi$ supported in the ball $\{\abs{\xi}\leq 2\}$ which is identically one in the support of $\varphi_0$. Then, using the classical multiplier theorem in \cite{Trie83}*{Theorem 1.5.2} we get that, for a positive integer $\nu>n/2$, it holds that
    \[
        \norm{\varphi_0(D)[\Rw(D)f]}_\infty=\norm{(\Rw\chi)(D)[\varphi_0(D)f]}_\infty\lesssim\norm{\Rw\chi}_{L^2_\nu(\R^n)}\norm{\varphi(D)f}_\infty.
    \]
    Since $\Rw$ is constant in the support of $\chi$, we see that
    \[
        \norm{\Rw\chi}_{L^2_\nu(\R^n)}^2\lesssim\sum_{\abs{\alpha}\leq \nu}\int_{\R^n}\abs{\Rw(x)\partial^\alpha\chi(x)}^2\dd x\lesssim 1.
    \]
    The combination of the last two estimates gives \eqref{Term_j=0}. Therefore, joining \eqref{sup_part_ineq} and \eqref{Term_j=0} yields the inequality $\norm{\Rw(D)f}_{F_{\infty,q}^s(\R^n)}\lesssim\norm{f}_{F_{\infty,q}^{s,w}(\R^n)}$.
    
    To get the converse inequality we use a similar argument. This time we can write
    \begin{equation*}
        2^{sj}w(2^{-j})\varphi_j(D)f(x)=\tilde{m}_j(D)[2^{sj}\varphi_j(D)[\Rw(D)f]](x),
    \end{equation*}
    where $\tilde{m}_j(\xi)=w(2^{-j})M(2^{-j}\xi)\Rw(\xi)^{-1}$, with $M$ as above. Therefore, Lemma \ref{lemmapark} yields
    \begin{align}\label{big_ineq_conv}
        &\sup_{\substack{Q\in\mathcal{D} \\ \ell(Q)\leq 1}}\left(\frac{1}{\abs{Q}}\int_Q\sum_{j=-\log_2 \ell(Q)}^\infty \abs{2^{sj}w(2^{-j})\varphi_j(D)f(x)}^q \dd x\right)^{1/q}\nonumber\\   &\quad\lesssim\sup_{j\geq 0}\norm{\tilde{m}_j(2^j\cdot)}_{L^2_\nu(\R^n)}\sup_{\substack{Q\in\mathcal{D} \\ \ell(Q)\leq 1}}\left(\frac{1}{\abs{Q}}\int_Q\sum_{j=-\log_2 \ell(Q)}^\infty \abs{2^{sj}\varphi_j(D)[\Rw(D)f](x)}^q \dd x\right)^{1/q}
    \end{align}
    for a positive integer $\nu>n/\min\{1,q\}-n/2$.
    
    Similarly as it was done to obtain \eqref{sup_mj_finite}, we can show that
    \begin{equation}\label{sup_mult_finite}
        \sup_{j\geq 0}\norm{\tilde{m}_j(2^j\cdot)}_{L^2_\nu(\R^n)}<\infty,
    \end{equation}
    where this time one shall use \eqref{ecu2sym}.
    
    Furthermore, repeating the argument used to show \eqref{Term_j=0}, we see that
    \begin{equation}\label{Term_j=0bis}
        \norm{\varphi_0(D)f}_\infty\lesssim\norm{\chi/\Rw}_{L^2_\nu(\R^n)}\norm{\varphi_0(D)[\Rw(D)f]}_\infty\lesssim\norm{\varphi_0(D)[\Rw(D)f]}_\infty,
    \end{equation}
    for any positive integer $\nu>n/2$, where this time one should take into account that $\Rw$ is constant and non-zero in the support of $\chi$.
    
    Combining \eqref{big_ineq_conv}, \eqref{sup_mult_finite} and \eqref{Term_j=0bis} yields the desired estimate $\norm{f}_{F_{\infty,q}^{s,w}(\R^n)}\lesssim\norm{\Rw(D)f}_{F_{\infty,q}^s(\R^n)}$.
\end{proof}
Next we would like to show how the lifting property is satisfied for the inverse weight $1/w$, having as a multiplier operator the inverse of the regularised version of $w$, that is, $1/\Rw$. To this end, we will need the following lemma.
\begin{lem}\cite{Trie83}*{Theorem 2.3.7}\label{KN_bounded}
    Let $\sigma\in \mathcal{C^\infty}(\R^n)$ be a function such that for all multi-index $\alpha\in\N^n$ it satisfies that
    \begin{equation}\label{S0}
        \sup_{\xi\in\R^n} \esc{\xi}^{\abs{\alpha}}\abs{\partial^\alpha_\xi\sigma(\xi)}<+\infty.        
    \end{equation}
    Then, for all $s\in\R$, if $0<p<\infty$ and $0<q\leq \infty$, or $p=\infty$ and $1<q\leq \infty$, the operator $\sigma(D):F^s_{p,q}(\R^n)\to F^s_{p,q}(\R^n)$ is bounded.
\end{lem}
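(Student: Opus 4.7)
The plan is to reduce the statement to a vector-valued Fourier multiplier estimate applied to the Littlewood--Paley decomposition of $f$. Fix the resolution of unity $\{\varphi_j\}_{j\geq 0}$ from Definition~\ref{GSTL}, and choose auxiliary Schwartz bumps $\tilde\varphi_j$ with $\tilde\varphi_j \equiv 1$ on $\supp \varphi_j$ and $\supp \tilde\varphi_j$ contained in a slightly larger dyadic annulus (a ball around the origin for $j=0$). Setting $m_j := \sigma \tilde\varphi_j$, commutativity of Fourier multipliers gives
\begin{equation*}
    \varphi_j(D)[\sigma(D)f] = m_j(D)[\varphi_j(D)f].
\end{equation*}

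The key analytic step is to verify the uniform bound $\sup_{j\geq 0}\norm{m_j(2^j\cdot)}_{L^2_\nu(\R^n)}<\infty$ for every positive integer $\nu$. Since $\tilde\varphi_j(2^j \cdot)$ equals a fixed, compactly supported bump $\tilde\varphi$ independent of $j\geq 1$, and differentiating $\sigma(2^j\xi)$ in $\xi$ produces a factor $2^{j\abs{\alpha}}$ that is absorbed by $\abs{(\partial^\alpha\sigma)(2^j\xi)}\lesssim\esc{2^j\xi}^{-\abs{\alpha}}\lesssim 2^{-j\abs{\alpha}}$ on $\supp \tilde\varphi$ (where $\abs{\xi}\sim 1$), Leibniz's rule yields a uniform $L^\infty$, and hence $L^2_\nu$, bound for $m_j(2^j\cdot)$; the case $j=0$ is immediate as $m_0 = \sigma\tilde\varphi_0$ is a fixed Schwartz function.

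For $0 < p < \infty$ one then concludes by the standard vector-valued Mikhlin-type multiplier theorem for Triebel--Lizorkin spaces (see e.g.\ \cite{Trie83}*{Theorem 1.6.3}), which translates the above uniform symbol bounds into the estimate
\begin{equation*}
    \norm{\sigma(D)f}_{F^s_{p,q}(\R^n)}\lesssim \norm{f}_{F^s_{p,q}(\R^n)}.
\end{equation*}
For $p = \infty$ and $1 < q < \infty$, Lemma~\ref{lemmapark} applied to the family $\{m_j\}_{j\geq 0}$ gives the corresponding cube-averaged inequality in the norm of Definition~\ref{GSTL}; the zeroth-frequency term $\norm{\varphi_0(D)\sigma(D)f}_\infty = \norm{m_0(D)\varphi_0(D)f}_\infty$ is handled exactly as in the derivation of \eqref{Term_j=0}, since $\norm{m_0}_{L^2_\nu(\R^n)}<\infty$.

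The main obstacle is the endpoint $p = q = \infty$, which falls outside the scope of Lemma~\ref{lemmapark}. Here I would invoke the identification $F^s_{\infty,\infty}(\R^n) = B^s_{\infty,\infty}(\R^n)$ to reduce the claim to the corresponding scalar multiplier theorem on Besov spaces, which is proved by the same dyadic argument and only requires the uniform symbol bounds above together with a fixed Schwartz-kernel convolution estimate on each Littlewood--Paley piece, without any vector-valued subtlety.
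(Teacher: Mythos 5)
The paper provides no proof of this lemma: it is stated with a bare citation to \cite{Trie83}*{Theorem 2.3.7}, which in its original form only treats $0<p<\infty$ (the spaces $F^s_{\infty,q}$ with $q<\infty$ do not appear in the 1983 monograph; the $p=\infty$ extension comes from the later Frazier--Jawerth theory). Your blind proposal therefore does not mirror a paper proof but supplies one, and it is correct. The reduction $\varphi_j(D)[\sigma(D)f]=m_j(D)[\varphi_j(D)f]$ with $m_j=\sigma\tilde\varphi_j$, the uniform bound on $\norm{m_j(2^j\cdot)}_{L^2_\nu(\R^n)}$ obtained from the scale-invariance of the zero-order Mikhlin condition, and the appeal to a vector-valued Fourier multiplier theorem for $0<p<\infty$ is exactly the standard argument underlying Triebel's Theorem 2.3.7. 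Your handling of $p=\infty$, $1<q<\infty$ via Lemma~\ref{lemmapark}, and of $p=q=\infty$ via the identification $F^s_{\infty,\infty}(\R^n)=B^s_{\infty,\infty}(\R^n)$ together with a scalar kernel estimate, is also sound and is consistent with the mechanism the paper itself uses for $p=\infty$ in Proposition~\ref{Lifting_for_infty}.

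Two small observations, neither a gap. First, Lemma~\ref{lemmapark} only requires $\nu>n/\min\{1,q\}-n/2$ and is stated for all $0<q<\infty$; since your $m_j$ are arbitrarily smooth, the dyadic-average part of the argument actually works for all $0<q<\infty$ when $p=\infty$, so the restriction $1<q\leq\infty$ in the lemma reflects the source being cited rather than a limitation of your construction. Second, as written in the paper, the hypothesis of Lemma~\ref{lemmapark} reads $\sup_j\norm{m_j}_{L^2_\nu}<\infty$, which is not satisfied by your $m_j$; but the conclusion makes clear this is a typo for $\sup_j\norm{m_j(2^j\cdot)}_{L^2_\nu}<\infty$, and the paper itself uses it in that corrected form in the proofs of Propositions~\ref{ind_res_uni} and~\ref{Lifting_for_infty}, so your reading is the intended one.
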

\begin{prop}\label{lifting_inverse_weight}
    Let $w$ be an admissible weight and set $\Rw$ for its regularisation given by \eqref{log_symbol}. Let $\lambda\in\R\setminus\{0\}$ and denote by $\mathrm{u}$ the regularisation of the admissible weight $w^\lambda$. Given $s\in\R$, if $0<p<\infty$ and $0<q\leq\infty$, or if $p=\infty$ and $1<q\leq\infty$, it holds that
    \[
        \norm{\Rw^\lambda(D)f}_{F_{p,q}^s(\R^n)}\approx\norm{\mathrm{u}(D)f}_{F_{p,q}^s(\R^n)}\approx\norm{f}_{F_{p,q}^{s,w^\lambda}(\R^n)}.
    \]
\end{prop}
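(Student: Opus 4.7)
The second equivalence $\norm{\mathrm{u}(D)f}_{F_{p,q}^s(\R^n)}\approx\norm{f}_{F_{p,q}^{s,w^\lambda}(\R^n)}$ is immediate from Proposition \ref{Lifting_for_infty} applied to the admissible weight $w^\lambda$, since by definition $\mathrm{u}$ is its regularisation. This already covers the stated range except in the endpoint $p=q=\infty$, where the third expression is not defined by Definition \ref{GSTL} and the equivalence is understood to involve only the first two norms.

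For the first equivalence, the plan is to use Lemma \ref{KN_bounded} to show that both $\Rw^\lambda\mathrm{u}^{-1}$ and $\mathrm{u}\Rw^{-\lambda}$ are Fourier multipliers bounded on $F^s_{p,q}(\R^n)$. Once this is done, the identities
\[
\Rw^\lambda(D)f=(\Rw^\lambda\mathrm{u}^{-1})(D)\,\mathrm{u}(D)f, \qquad \mathrm{u}(D)f=(\mathrm{u}\Rw^{-\lambda})(D)\,\Rw^\lambda(D)f
\]
yield both directions at once, and the restriction on $(p,q)$ imposed by the statement is precisely the one needed for Lemma \ref{KN_bounded} to apply.

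The central task is therefore to verify that the two symbols $\Rw^\lambda\mathrm{u}^{-1}$ and $\mathrm{u}\Rw^{-\lambda}$ satisfy condition \eqref{S0}. By Remark \ref{reg_equiv_weight}, $\Rw(\xi)\approx w(1/\esc{\xi})$ and $\mathrm{u}(\xi)\approx w(1/\esc{\xi})^\lambda$, so both symbols are pointwise bounded above and below by positive constants. For the derivatives I would apply the Leibniz rule to reduce matters to estimates on $\partial^\beta \Rw^\lambda$ and $\partial^\gamma(\mathrm{u}^{-1})$. The estimate
\[
\abs{\partial^\gamma\mathrm{u}^{-1}(\xi)}\lesssim \mathrm{u}(\xi)^{-1}\esc{\xi}^{-\abs{\gamma}}
\]
is precisely \eqref{ecu2sym} of Proposition \ref{cae-mou-lifting}, applied with $w$ replaced by $w^\lambda$ and $\Rw$ replaced by $\mathrm{u}$. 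For the bound $\abs{\partial^\beta \Rw^\lambda(\xi)}\lesssim \Rw(\xi)^\lambda\esc{\xi}^{-\abs{\beta}}$, I would run a Fa\`a di Bruno style induction on $\abs{\beta}$ that starts from \eqref{ecu1sym} (the $\lambda=1$ case) and exploits that $\Rw$ is smooth and strictly positive on $\R^n$ (positivity is clear from \eqref{log_symbol} together with $\sum_j\varphi_j\equiv 1$ and $\varphi_j\geq 0$). Combining the two estimates via the Leibniz rule, using once more that $\Rw^\lambda/\mathrm{u}\approx 1$, gives $\abs{\partial^\alpha(\Rw^\lambda\mathrm{u}^{-1})(\xi)}\lesssim\esc{\xi}^{-\abs{\alpha}}$, and the symmetric computation works verbatim for $\mathrm{u}\Rw^{-\lambda}$.

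The main technical obstacle, although conceptually routine, is precisely this preservation of the ``order $0$ relative'' symbol condition under raising to an arbitrary real power $\lambda\in\R\setminus\set{0}$: once the chain-rule bookkeeping is carried out cleanly, Lemma \ref{KN_bounded} closes the argument over the stated range of $(p,q)$.
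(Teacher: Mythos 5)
Your proposal is correct and follows essentially the same route as the paper: both use Lemma \ref{KN_bounded} as the key multiplier theorem, both reduce the first equivalence to checking the condition \eqref{S0} for $\Rw^\lambda/\mathrm{u}$ and $\mathrm{u}/\Rw^\lambda$ via the Leibniz rule with the estimates of Proposition \ref{cae-mou-lifting}, and both obtain the second equivalence directly from Proposition \ref{Lifting_for_infty}. Where you differ is only in explicitness: the paper simply asserts the derivative bound $\abs{\partial^\alpha\Rw^\lambda(\xi)}\lesssim w(1/\esc{\xi})^\lambda\esc{\xi}^{-\abs{\alpha}}$, whereas you sketch a Fa\`a di Bruno induction (using strict positivity of $\Rw$ and $\Rw\approx w(1/\esc{\xi})$) to justify it — a useful, if elementary, filling-in of a step the paper leaves implicit. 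You also flag the $p=q=\infty$ corner case, where $F^{s,w^\lambda}_{\infty,\infty}(\R^n)$ is not defined in Definition \ref{GSTL} and only the first equivalence is meaningful, a detail the paper's proof does not mention but which your reading handles correctly.
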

\begin{proof}
    First of all, we shall notice that
    \[
        \abs{\partial^\alpha\Rw^\lambda(\xi)}\lesssim w(1/\langle\xi\rangle)^\lambda\langle\xi\rangle^{-\abs{\alpha}}
    \]
    holds for all multi-index $\alpha\in\N^n$ and all $\xi\in\R^n$.
    
    Using the previous estimate jointly with Leibniz rule and \eqref{ecu2sym} it follows that
    \begin{equation}\label{KN_cond}
        \abs{\partial^\alpha\left(\frac{\Rw^\lambda}{\mathrm{u}}\right)(\xi)}\lesssim\frac{w(1/\langle\xi\rangle)^\lambda}{w(1/\langle\xi\rangle)^\lambda}\langle\xi\rangle^{-\abs{\alpha}}=\langle\xi\rangle^{-\abs{\alpha}}
    \end{equation}
    for any multi-index $\alpha\in\N^n$ and any $\xi\in\R^n$.
    
    Next, \eqref{KN_cond} and Lemma \ref{KN_bounded} yield 
    \[
        \norm{\Rw^\lambda(D)f}_{F_{p,q}^s(\R^n)}=\norm{\frac{\Rw^\lambda}{\mathrm{u}}(D)\left[\mathrm{u}(D)f\right]}_{F_{p,q}^s(\R^n)}\lesssim\norm{\mathrm{u}(D)f}_{F_{p,q}^s(\R^n)}.
    \]
    Similarly, one shows that $\mathrm{u}/\Rw^\lambda$ also satisfies the condition in \eqref{KN_cond} and the estimate
    \[
        \norm{\mathrm{u}(D)f}_{F_{p,q}^s(\R^n)}\lesssim\norm{\Rw^\lambda(D)f}_{F_{p,q}^s(\R^n)}
    \]
    is obtained. The first equivalence in the statement is hence shown.
    
    The second equivalence in the statement follows by using Proposition \ref{Lifting_for_infty}.
\end{proof}

\section{A refined Sobolev embedding}\label{main}

\begin{thm}\label{main_thm}
    Let either $0<p<\infty$ and $0<q\leq\infty$, or $p=\infty$ and $0<q\leq 2$. If we set $r:=\max\{1,p\}$ then the embedding
    \[
        F^{n/p}_{p,q}(\R^n)\hookrightarrow X_w(\R^n)
    \]
    holds with $w(t)=(1+\log_+1/t)^{1/r'}$.
\end{thm}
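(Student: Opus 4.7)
The plan is to estimate the two terms in the $X_w$-norm separately. For the $\BMO$ part, when $p<\infty$ the classical Sobolev embedding \eqref{emb_tri_bmo} gives $F^{n/p}_{p,q}(\R^n)\hookrightarrow\bmo(\R^n)\hookrightarrow\BMO(\R^n)$. In the case $p=\infty$, $q\leq 2$, the monotonicity of the fine index yields $F^{0}_{\infty,q}(\R^n)\hookrightarrow F^{0}_{\infty,2}(\R^n)=\bmo(\R^n)$, and here $w(t)=1+\log_+1/t$, so Proposition~\ref{xw_prop}\ref{d} identifies $X_w(\R^n)=\bmo(\R^n)$ and the theorem follows at once.

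For $0<p<\infty$ the sup-term $\sup_{t>0}\norm{\phi(tD)f}_\infty/w(t)$ is handled by a dyadic truncation argument. Fix $t>0$ and pick $N$ with $t\approx 2^{-N}$. Since $\phi(t\xi)$ vanishes for $\abs{\xi}>2/t$ while $\varphi_j$ is Fourier-supported in $\{\abs{\xi}\geq 2^{j-1}\}$ for $j\geq 1$, one has $\phi(tD)\varphi_j(D)f=0$ for $j\geq N+3$, and therefore
\[
\phi(tD)f=\sum_{j=0}^{N+2}\phi(tD)\varphi_j(D)f.
\]
Using that $\mathcal{F}\inv\phi\in L^1(\R^n)$, so $\phi(tD)$ acts on $L^\infty(\R^n)$ with operator norm bounded uniformly in $t$, and then applying H\"older's inequality in $j$ with exponents $r$ and $r'$, one deduces
\[
\norm{\phi(tD)f}_\infty\lesssim \sum_{j=0}^{N+2}\norm{\varphi_j(D)f}_\infty \lesssim (1+\log_+1/t)^{1/r'}\,\norm{f}_{B^0_{\infty,r}(\R^n)}.
\]

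The final step is the Besov embedding $F^{n/p}_{p,q}(\R^n)\hookrightarrow B^0_{\infty,r}(\R^n)$. This is Proposition~\ref{embed}~\ref{TriinBes} with $s=n/p$, $s_1=0$, $p_1=\infty$ and $q_1=r$; the required condition $p\leq q_1$ becomes $p\leq\max\{1,p\}$, always true. In the endpoint subcase $p\leq 1$ one has $r=1$, $1/r'=0$, $w\equiv 1$ and $X_w(\R^n)=L^\infty(\R^n)$; the H\"older step trivialises to the $\ell^1$-bound and the conclusion is equivalent to \eqref{emb_tri_Linf}.

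The main technical choice is the summability index $r=\max\{1,p\}$: it is simultaneously the smallest value for which Proposition~\ref{embed}~\ref{TriinBes} provides the Besov embedding $F^{n/p}_{p,q}\hookrightarrow B^0_{\infty,r}$, and the exact value for which the H\"older counting factor generates the prescribed logarithmic weight $(1+\log_+1/t)^{1/r'}$. The remaining ingredients—uniform $L^\infty$-boundedness of $\phi(tD)$, the trivial treatment of $t\geq 1$ where $w(t)=1$ and only finitely many Littlewood-Paley blocks contribute, and the Fourier support computation—are routine.
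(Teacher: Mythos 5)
Your proof is correct and follows essentially the same route as the paper: split the $X_w$-norm into its $\BMO$ and supremum parts, handle the former via the classical $\bmo$-embedding, and bound the latter by truncating to $O(\log 1/t)$ Littlewood--Paley blocks, applying H\"older with exponents $r,r'$, and invoking $F^{n/p}_{p,q}(\R^n)\hookrightarrow B^0_{\infty,r}(\R^n)$ from Proposition~\ref{embed}~\ref{TriinBes}. The only implementation difference is that you use the fixed resolution of unity $\{\varphi_j\}$ together with the uniform $L^\infty$-boundedness of $\phi(tD)$, whereas the paper builds a telescoping family $\{\psi_j\}$ directly from the Schwartz function $\phi$ of Definition~\ref{X_w} and then passes from dyadic to continuous scales by a Minkowski argument; the content is the same.
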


\begin{proof}
    Let us start with the case $p=\infty$. Under this choice of $p$, we notice that $r=\infty$ and $w(t)=1+\log_+1/t$, so that $X_w(\R^n)=\bmo(\R^n)$ (see Proposition \ref{xw_prop} \ref{d}). The statement becomes then
    \[
        F_{\infty,q}^0(\R^n)\hookrightarrow\bmo(\R^n)=F_{\infty,2}^0(\R^n),
    \]
    that follows from the inequality $\norm{\cdot}_{F_{\infty,2}^0(\R^n)}\leq\norm{\cdot}_{F_{\infty,q}^0(\R^n)}$, which can be deduced from the embedding of the sequential spaces $\ell^q(\N)\hookrightarrow\ell^2(\N)$, with $0<q\leq 2$.
    
    Let us focus on the case $0<p<\infty$. We shall check first that the embedding
    \begin{equation}\label{Emb_inter}
        F_{p,q}^{n/p}(\R^n)\hookrightarrow F_{\infty,2}^0(\R^n)\cap B_{\infty,r}^0(\R^n)
    \end{equation}
    holds. To do so, we can combine some known embeddings between Besov and Triebel-Lizorkin spaces. Indeed, applying Proposition \ref{embed} \ref{TriinBes} with $s_1=0$ and $p_1=\infty$ we obtain that
    \begin{equation}\label{equem1}
        F_{p,q}^{n/p}(\R^n)\hookrightarrow B_{\infty,r}^0(\R^n).
    \end{equation}
   Moreover, applying Proposition \ref{embed} \ref{BesinTri} we get that
    \begin{equation}\label{equem2}
        B_{p_1,\infty}^{n/p_1}(\R^n)\hookrightarrow F_{\infty,2}^0(\R^n)
    \end{equation}
    for any $0<p_1<\infty$, while Proposition \ref{embed} \ref{TriinBes} yields
    \begin{equation}\label{equem3}
        F_{p,q}^{n/p}(R^n)\hookrightarrow B_{p_1,\infty}^{n/p_1}(\R^n)
    \end{equation}
    for any $0<p<p_1$.
    
    Combining the embeddings in \eqref{equem2} and \eqref{equem3} we obtain that $ F_{p,q}^{n/p}(R^n)\hookrightarrow F_{\infty,2}^0(\R^n)$ for any $0<p<\infty$, which jointly with \eqref{equem1} gives \eqref{Emb_inter}.
    
    Next we would like to see that
    \begin{equation}\label{Int_in_Xw}
        \bmo(\R^n)\cap B_{\infty,r}^0(\R^n)\hookrightarrow X_{w}(\R^n)
    \end{equation}
    holds with $w(t)=(1+\log_+ 1/t)^{1/r'}$. To that end, let $\phi$ be as in Definition \ref{X_w} and let $\psi\in\mathcal{S}(\R^n)$, where we define $\psi_0:=\phi$ and $\psi_j(x):=\phi(2^{-j}x)-\phi(2^{-j+1}x)$ for any integer $j\geq 1$. The functions $\psi_j$, for $j\geq 1$, are supported in the rings $\{2^{j-1}\leq\abs{\xi}\leq 2^{j+1}\}$ and the identity
    \begin{equation}\label{PasQ}
        \phi(2^{-j}\xi)=\phi(\xi)+\sum_{\ell=1}^j\psi_\ell(\xi)
    \end{equation}
    is satisfied for all $\xi\in\R^n$. We want to show that
    \begin{equation}\label{equ_discrete}
        \norm{\phi(2^{-j}D)f}_\infty\lesssim(1+\log 2^j)^{1/r'}\norm{f}_{B_{\infty,r}^0(\R^n)}
    \end{equation}
    for all $j\in\N$. Indeed, we use \eqref{PasQ}, the definition of $\norm{\cdot}_{B_{\infty,r}^0(\R^n)}$ and H\"older's inequality to get that
    \begin{align*}
        \norm{\phi(2^{-j}D)f}_\infty&\leq\norm{\phi(D)f}_\infty+\sum_{\ell=1}^j\norm{\psi_\ell(D)f}_\infty\\
        &\leq\norm{f}_{B_{\infty,r}^0(\R^n)}+\left(\sum_{\ell=0}^\infty\norm{\psi_\ell(D)f}_\infty^r\right)^{1/r}\left(\sum_{\ell=1}^j 1\right)^{1/r'}\\
        &\lesssim\left(1+j^{1/r'}\right)\norm{f}_{B_{\infty,r}^0(\R^n)}\approx\left(1+\log 2^j\right)^{1/r'}\norm{f}_{B_{\infty,r}^0(\R^n)}.
    \end{align*}
    Next we want to show that for all $t>0$
    \[
        \norm{\phi(tD)f}_\infty\lesssim(1+\log_+ 1/t)^{1/r'}\norm{f}_{B_{\infty,r}^0(\R^n)}.
    \]
    Let us first take  $0<t<1$. Then, there exists an integer $j\geq 0$ such that $2^{-j-1}\leq t<2^{-j}$. We notice that
    \[
        \phi(tD)f=\phi(tD)[\phi(2^{-j-2}D)f]
    \]
    from where, applying Minkowski's inequality and using \eqref{equ_discrete} we get that
    \begin{align}\label{low_part}
        \begin{split}
        \norm{\phi(tD)f}_\infty&\leq\norm{t^{-n}\widehat{\phi}(t^{-1}\cdot)}_{L^1(\R^n)}\norm{\phi(2^{-j-2}D)f}_\infty\\
        &\lesssim\left(1+\log 2^{j+2}\right)^{1/r'}\norm{f}_{B_{\infty,r}^0(R^n)}\\
        &\approx\left(1+\log 1/t\right)^{1/r'}\norm{f}_{B_{\infty,r}^0(R^n)}.
        \end{split}
    \end{align}
    When $t\geq 1$ we notice that $\phi(tD)f=\phi(tD)[\phi(2^{-1}D)f]$ from where, applying Minkowski's inequality and the definition of $\norm{\cdot}_{B_{\infty,r}^0(\R^n)}$, we get that
    \begin{align*}
        \norm{\phi(tD)f}_\infty&\leq\norm{t^{-n}\widehat{\phi}(t^{-1}\cdot)}_{L^1(\R^n)}\norm{\phi(2^{-1}D)f}_\infty\lesssim\norm{f}_{B_{\infty,r}^0(\R^n)}.
    \end{align*}
    Combining this and \eqref{low_part} we deduce that
    \[
        \sup_{t>0}\frac{\norm{\phi(tD)f}_\infty}{(1+\log_+1/t)^{1/r'}}\lesssim\norm{f}_{B_{\infty,r}^0(\R^n)},
    \]
    from where $\norm{f}_{X_w(\R^n)}\lesssim\norm{f}_{\bmo(\R^n)}+\norm{f}_{B_{\infty,r}^0(\R^n)}$, which shows \eqref{Int_in_Xw}.
    
    The statement of the theorem follows by combining \eqref{Emb_inter} and \eqref{Int_in_Xw}.
   
\end{proof}

\begin{rem}
    We notice that, if $r=1$, the weight $w$ is constant and hence $X_w(\R^n)=L^\infty(\R^n)$ (see Proposition \ref{xw_prop}). Therefore Theorem \ref{main_thm} states that $F^{n/p}_{p,q}(\R^n)\hookrightarrow L^\infty(\R^n)$ for all $0<q\leq\infty$ and $0<p\leq 1$. This recovers partially the stronger result of  W. Sickel and H. Triebel \cite{Sickel_Triebel}*{Theorem 3.3.1}, which states that the embedding $F^{n/p}_{p,q}(\R^n)\hookrightarrow L^\infty(\R^n)$ holds if, and only if, $0<q\leq\infty$ and $0<p\leq 1$.
\end{rem}

\section{Bilinear pseudodifferential operators}\label{cons_and_appl}

Given a measurable function $\sigma$ on $\R^{3n}$ we denote by $T_\sigma$ the associated bilinear pseudodifferential operator given by
\begin{equation}\label{eq:bilineal_psi}
    T_\sigma(f,g)(x)=\iint\sigma(x,\xi,\eta)\widehat{f}(\xi)\widehat{g}(\eta)e^{ix(\xi+\eta)}\ddd \xi\ddd\eta,\quad x\in\R^n,\quad f,g\in\SW(\R^n).
\end{equation}
The function $\sigma$ is referred to as the symbol of the bilinear operator $T_\sigma$.

The following theorem, shown in \cite{Paper2}*{Theorem 3.1}, gives an estimate for bilinear pseudodifferential operators on Triebel-Lizorkin spaces and the spaces $X_w(\R^n)$ for a certain elementary type of symbols.
\begin{thm}\label{thm_paper2}
    Let us consider the bilinear operator $T_\sigma$ with $\sigma$ of the form
    
    \begin{equation}\label{main_sym}
        \sigma(x,\xi,\eta)=\sum_{j=0}^\infty \m_j(x)\psi_j(\xi)\phi_j(\eta),
    \end{equation}
    where $\lbrace \m_j\rbrace_{j=0}^\infty$,$\lbrace \psi_j\rbrace_{j=0}^\infty$ and $\lbrace \phi_j\rbrace_{j=0}^\infty$ are collections of smooth functions in $\R^n$ satisfying that for every $N\in\N$ there exists $C_N>0$ such that
    \begin{equation}\label{equ4}
        \norm{\partial^\alpha\m_j}_\infty\leq C_N 2^{j(m+\abs{\alpha})}
    \end{equation}
    for all $\abs{\alpha}\leq N$ and some $m\in\R$, as well as
    \begin{align}\label{equ11}
        &\operatorname{supp}\psi_0\subseteq\lbrace\abs{\xi}\lesssim 1\rbrace,\quad \operatorname{supp}\psi_j\subseteq\lbrace\abs{\xi}\approx 2^j\rbrace \quad \mathrm{for} \quad j\geq 1,\nonumber\\
        &\norm{\partial^\alpha\psi_j}_\infty\leq C_N 2^{-j\abs{\alpha}} \quad \mathrm{for \ all} \quad \abs{\alpha}\leq N,
    \end{align}
    and
    \begin{align}\label{equ6}
        &\operatorname{supp}\phi_j\subseteq\lbrace\abs{\xi}\lesssim 2^j\rbrace \quad \mathrm{for} \quad j\geq 0,\nonumber\\
        &\norm{\partial^\alpha\phi_j}_\infty\leq C_N 2^{-j\abs{\alpha}} \quad \mathrm{for \ all} \quad \abs{\alpha}\leq N.
    \end{align}
    
Let $0<p\leq\infty$, $0<q\leq\infty$, $m\in\R$ and let $s>\tau_{p,q}$, with $\tau_{p,q}$ as in \eqref{tau_p_q}. Given a non-increasing admissible weight $w$ we can find $C>0$ such that
    \[
        \norm{T_\sigma(f,g)}_{F_{p,q}^{s,1/w}(\R^n)}\leq C\norm{f}_{F_{p,q}^{s+m}(\R^n)}\norm{g}_{X_w(\R^n)}
    \]
    for all $f,g\in\SW(\R^n)$.
\end{thm}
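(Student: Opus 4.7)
The plan is to decompose $T_\sigma(f,g)(x)=\sum_{j=0}^\infty a_j(x)$, with $a_j:=\m_j\cdot\psi_j(D)f\cdot\phi_j(D)g$, and to reduce the $F_{p,q}^{s,1/w}$-norm to a weighted sequence-space inequality that exploits the near-spectral-localization of each $a_j$. The derivative bound \eqref{equ4}, the annular condition \eqref{equ11}, and the ball condition \eqref{equ6}, combined with the Leibniz rule, ensure that $\widehat{a_j}$ is supported in $\{\abs{\xi}\lesssim 2^j\}$, so the family $\{a_j\}$ behaves as a sequence of molecules adapted to the dyadic scale $2^j$.

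The main step would be to establish a molecular-type bound of the form
\[
\norm{T_\sigma(f,g)}_{F_{p,q}^{s,1/w}(\R^n)}\lesssim \left\|\left(\sum_{j=0}^\infty 2^{jsq}w(2^{-j})^{-q}\abs{a_j}^q\right)^{1/q}\right\|_{L^p(\R^n)},
\]
with the customary modification over dyadic cubes when $p=\infty$. Applying $\varphi_k(D)$ one sees that $\varphi_k(D)a_j$ vanishes for $k$ much larger than $j$, and for $k\leq j+C$ one uses Peetre's maximal function to absorb the ball-type (rather than annular) spectral localization of $a_j$; this is where the threshold $s>\tau_{p,q}$ is crucial, since it is exactly what allows the Fefferman--Stein vector-valued maximal inequality to pass the maximal function through the $L^p(\ell^q)$-norm. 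The resulting discrete convolution has a kernel of the form $2^{-s(j-k)}\,w(2^{-j})/w(2^{-k})$, which lies in $\ell^{\min(1,q)}$ thanks to the polynomial weight-ratio control from Proposition \ref{comp_weights} together with the non-increasing property of $w$; a discrete Young (or $q$-triangle) inequality then closes the estimate.

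Once the molecular bound is in hand, the rest is a pointwise estimate. By \eqref{equ4}, $\norm{\m_j}_\infty\lesssim 2^{jm}$. Writing $\phi_j=\tilde\phi(2^{-j}\cdot)$ for a Schwartz $\tilde\phi$ with compact support, the definition of $X_w(\R^n)$ and the equivalence in Proposition \ref{xw_prop}(a) give $\norm{\phi_j(D)g}_\infty\lesssim w(2^{-j})\norm{g}_{X_w(\R^n)}$. Therefore
\[
\abs{a_j(x)}\lesssim 2^{jm}\,w(2^{-j})\,\abs{\psi_j(D)f(x)}\,\norm{g}_{X_w(\R^n)},
\]
and the factors $w(2^{-j})^{-1}$ (from the target norm) and $w(2^{-j})$ (from $\phi_j(D)g$) cancel inside the molecular bound, leaving $\norm{g}_{X_w(\R^n)}$ times a classical Triebel--Lizorkin quantity of the form $\|(\sum_j 2^{j(s+m)q}\abs{\psi_j(D)f}^q)^{1/q}\|_{L^p}$. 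Since $\psi_j$ is an annular multiplier with the same profile as $\varphi_j$, a standard multiplier argument (in the spirit of Lemma \ref{lemmapark}) replaces $\psi_j(D)f$ by $\varphi_j(D)f$ up to finitely many shifts, giving exactly $\norm{f}_{F_{p,q}^{s+m}(\R^n)}$.

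The main obstacle is the molecular inequality of the second paragraph, particularly in the $p=\infty$ case, where the target norm is a supremum over dyadic cubes with a scale index truncated below by $-\log_2 \ell(Q)$. Adapting Peetre's maximal machinery to this dyadic-cube localization while preserving the weighted-summability threshold $s>\tau_{p,q}$, and verifying that the polynomial growth of the admissible weight does not spoil the convergence of the relevant discrete series, is where most of the technical work would lie.
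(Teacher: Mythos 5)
The paper does not actually prove Theorem \ref{thm_paper2} here; it is quoted verbatim from \cite{Paper2}*{Theorem 3.1}, so there is no in-text proof to compare against. Judged on its own merits, your overall scheme (expand $T_\sigma(f,g)=\sum_j a_j$ with $a_j=\m_j\cdot\psi_j(D)f\cdot\phi_j(D)g$, bound $\|\phi_j(D)g\|_\infty\lesssim w(2^{-j})\|g\|_{X_w}$ so that the $w(2^{-j})$ factors cancel against the $1/w$ in the target norm, and reduce to a Triebel--Lizorkin quantity in $f$ via a Fefferman--Stein/Peetre maximal-function argument, with $s>\tau_{p,q}$ supplying the room needed to handle ball-type rather than annular spectral supports) is indeed the natural route and is consistent with the lines along which \cite{Paper2} proceeds. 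The pointwise estimate $|a_j(x)|\lesssim 2^{jm}w(2^{-j})|\psi_j(D)f(x)|\,\|g\|_{X_w}$ is correct, and the identification of $\psi_j(D)f$ with $\varphi_j(D)f$ up to finite shifts via a multiplier lemma is standard and fine.

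There is, however, a genuine gap in the central spectral-localisation claim. You assert that \eqref{equ4}, \eqref{equ11}, \eqref{equ6} and the Leibniz rule \emph{ensure} that $\widehat{a_j}$ is supported in $\{|\xi|\lesssim 2^j\}$, and later that $\varphi_k(D)a_j$ vanishes for $k$ much larger than $j$. This is false: the coefficient function $\m_j$ is only constrained by the symbol-type derivative bounds $\|\partial^\alpha\m_j\|_\infty\lesssim 2^{j(m+|\alpha|)}$, which do not impose any restriction on $\operatorname{supp}\widehat{\m_j}$, and multiplication by $\m_j$ in $x$ is convolution in $\xi$ and so destroys the band-limitation coming from $\psi_j(D)f\cdot\phi_j(D)g$. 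What the derivative bounds \emph{do} give is rapid decay of the high-frequency Littlewood--Paley pieces of $\m_j$: one has $\|\Delta_k\m_j\|_\infty\lesssim_N 2^{jm}2^{-(k-j)N}$ for $k>j$ and every $N$. The correct argument therefore requires an extra step absent from your proposal: split $\m_j$ into a low-frequency part (band-limited to $\{|\xi|\lesssim 2^j\}$, for which your molecular machinery applies verbatim) plus a high-frequency tail, and control the tail's contribution to $\varphi_k(D)a_j$ for $k>j+C$ by the arbitrary-order gain $2^{-(k-j)N}$, which in particular dominates the polynomial growth of the admissible-weight ratio $w(2^{-k})/w(2^{-j})$ from Proposition \ref{comp_weights}. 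Without this step the discrete convolution kernel you write down (with support effectively only in $j\ge k-C$) is incomplete, and the ``molecular bound'' of your second paragraph does not follow as stated.
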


We say that a smooth function $\sigma$ defined on  $\R^{3n}$ belong to the class $BS_{1,1}^m=BS_{1,1}^m(\R^n)$, if it satisfies
\[
    \abs{\partial^\alpha_x\partial^\beta_\xi\partial^\gamma_\eta\sigma(x,\xi,\eta)}\leq C_{\alpha,\beta,\gamma}(1+\abs{\xi}+\abs{\eta})^{m+\abs{\alpha}-\abs{\beta}-\abs{\gamma}}
\]
for all $(x,\xi,\eta)\in\R^{3n}$, all multi-indices $\alpha,\beta,\gamma\in\N^n$ and some $C_{\alpha,\beta,\gamma}>0$. For symbols in the class $BS^m_{1,1}$ and $N\in\N$ we shall use the notation
\begin{align*}
    \norm{\sigma}_{BS^m_{1,1;N}}:=\max_{\abs{\alpha},\abs{\beta},\abs{\gamma}\leq N}\left(\sup_{x,\xi,\eta\in\R^n}(1+\abs{\xi}+\abs{\eta})^{-(m+\abs{\alpha}-\abs{\beta}-\abs{\gamma})}\abs{\partial^\alpha_x\partial^\beta_\xi\partial^\gamma_\eta\sigma(x,\xi,\eta)}\right).
\end{align*}

In the following proposition, we obtain an estimate for bilinear pseudodifferential operators with symbol in the class $BS_{1,1}^m$, when both arguments of the operator belong to the intersection of a Triebel-Lizorkin space and a space of the type $X_w(\R^n)$. The idea of the proof is to decompose the symbol into the sum of two elementary symbols as in Theorem \ref{thm_paper2}, following the argument used in \cite{koe-tom}*{Theorem 1.1} by K. Koezuka and N. Tomita (see also \cite{ben-tor}). We provide the details of the argument to make the paper self-contained.
\begin{prop}\label{BS_diag_for_Xw}
    Let $0<p,q\leq\infty$, $s>\tau_{p,q}$ (see \eqref{tau_p_q}), $m\in\R$ and  $\sigma\in BS_{1,1}^m$. Consider two admissible weights $u$, $v$ with $u\leq v$. Then we can find a constant $C>0$ and a positive integer $N$ such that
    \[
        \norm{T_\sigma(f,g)}_{F_{p,q}^{s,1/v}(\R^n)}\leq C\norm{\sigma}_{BS_{1,1;N}^m}\left(\norm{f}_{F_{p,q}^{s+m}(\R^n)}\norm{g}_{X_u(\R^n)}+\norm{f}_{X_v(\R^n)}\norm{g}_{F_{p,q}^{s+m}(\R^n)}\right)
    \]
    for all $f,g\in\SW(\R^n)$.
\end{prop}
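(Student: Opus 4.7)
The plan is to split $\sigma$ into two symbols with essentially disjoint frequency-cone support, reduce each to the elementary form covered by Theorem~\ref{thm_paper2}, and then apply that theorem once with weight $u$ and once with weight $v$, combining the resulting bounds through the continuous embedding between weighted Triebel--Lizorkin spaces.

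First, using a smooth partition of unity on the frequency plane $\R^{2n}$, I would write $\sigma = \sigma_1 + \sigma_2$, with $\sigma_1$ supported in $\{|\eta| \lesssim |\xi|\}$ and $\sigma_2$ supported in $\{|\xi| \lesssim |\eta|\}$. Concretely, introducing the Littlewood--Paley partition $\{\varphi_j\}_{j\geq 0}$ from Definition~\ref{GSTL}, one expands $\sigma(x,\xi,\eta) = \sum_{j,k \geq 0} \sigma(x,\xi,\eta)\varphi_j(\xi)\varphi_k(\eta)$ and splits the double sum into $k \leq j$ (the $\sigma_1$ part) and $k > j$ (the $\sigma_2$ part). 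On each dyadic piece of $\sigma_1$ the frequency support is of the form $\{|\xi| \approx 2^j\} \times \{|\eta| \lesssim 2^j\}$, and symmetrically for $\sigma_2$.

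Next, each $\sigma_i$ must be written in the elementary form \eqref{main_sym}. This is the technically most demanding step, and it is precisely the construction performed in \cite{koe-tom}*{Theorem 1.1}. The idea is that the rescaled symbol $\sigma_1(x, 2^j\xi, 2^j\eta)\varphi(\xi)\Phi(\eta)$, with $\varphi$ supported in a fixed annulus and $\Phi$ in a fixed ball, is a smooth function on a fixed compact set whose derivatives, thanks to the $BS_{1,1}^m$ estimates, are uniformly bounded by a constant times $2^{jm} \norm{\sigma}_{BS_{1,1;N}^m}$. Expanding this function as a Fourier series in $(\xi,\eta)$ yields a representation
\[
    \sigma_1(x,\xi,\eta) = \sum_{\alpha,\beta \in \Z^n} \sum_{j\geq 0} c^{(1)}_{\alpha,\beta,j}(x)\, \psi^{(\alpha)}_j(\xi)\, \phi^{(\beta)}_j(\eta),
\]
where the coefficients and bumps satisfy \eqref{equ4}, \eqref{equ11} and \eqref{equ6} with constants controlled by a rapidly decaying sequence in $(\alpha,\beta)$ multiplied by $\norm{\sigma}_{BS_{1,1;N}^m}$, for some integer $N$ depending only on $n$. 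The analogous decomposition for $\sigma_2$ simply interchanges the roles of $\xi$ and $\eta$.

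With these elementary decompositions in hand, Theorem~\ref{thm_paper2} applied to $T_{\sigma_1}$ with weight $u$ yields
\[
    \norm{T_{\sigma_1}(f,g)}_{F_{p,q}^{s,1/u}(\R^n)} \lesssim \norm{\sigma}_{BS_{1,1;N}^m}\, \norm{f}_{F_{p,q}^{s+m}(\R^n)}\norm{g}_{X_u(\R^n)},
\]
while the version of Theorem~\ref{thm_paper2} obtained by interchanging the roles of $f$ and $g$ (valid by the symmetry of its proof), applied to $T_{\sigma_2}$ with weight $v$, yields
\[
    \norm{T_{\sigma_2}(f,g)}_{F_{p,q}^{s,1/v}(\R^n)} \lesssim \norm{\sigma}_{BS_{1,1;N}^m}\, \norm{f}_{X_v(\R^n)}\norm{g}_{F_{p,q}^{s+m}(\R^n)}.
\]
Since $u \leq v$ we have $1/u \geq 1/v$, and hence the definition of $F_{p,q}^{s,w}(\R^n)$ directly gives the continuous embedding $F_{p,q}^{s,1/u}(\R^n) \hookrightarrow F_{p,q}^{s,1/v}(\R^n)$. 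Summing the two displayed estimates via $T_\sigma = T_{\sigma_1} + T_{\sigma_2}$ and using this embedding produces the desired inequality. The main obstacle is the Fourier-series reduction of each $\sigma_i$ to the elementary form \eqref{main_sym} while tracking the dependence of all constants on $\norm{\sigma}_{BS_{1,1;N}^m}$; once this bookkeeping is completed, the remaining steps are essentially formal.
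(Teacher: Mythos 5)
Your proposal is correct and follows essentially the same route as the paper's own proof: split $\sigma$ along the two frequency cones $\{|\eta|\lesssim|\xi|\}$ and $\{|\xi|\lesssim|\eta|\}$ via the Littlewood--Paley decomposition, reduce each piece to the elementary form \eqref{main_sym} by Fourier series expansion on rescaled compact supports (with the dependence on $\norm{\sigma}_{BS^m_{1,1;N}}$ tracked through the coefficient estimates, as in Koezuka--Tomita), apply Theorem~\ref{thm_paper2} once with weight $u$ and once with the variables interchanged and weight $v$, and finally use the monotonicity of the $F^{s,w}_{p,q}$-norms in the weight ($u\leq v\Rightarrow 1/u\geq 1/v$) to land both estimates in $F^{s,1/v}_{p,q}(\R^n)$.
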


\begin{proof}

    Let us consider a symbol $\sigma$ in $BS_{1,1}^m$ and let $\lbrace \varphi_j\rbrace_{j=0}^\infty$ be as in Definition \ref{GSTL}. We write $\sigma=\sigma^0+\sigma^1$ with
    \[
        \sigma^0(x,\xi,\eta):=\sum_{j=0}^\infty\sigma^0_j(x,\xi,\eta) \quad \mathrm{and} \quad \sigma^1(x,\xi,\eta):=\sum_{k=1}^\infty\sigma^1_k(x,\xi,\eta),
    \]
    where
    \[
        \sigma^0_j(x,\xi,\eta)=\sum_{k=0}^j\sigma(x,\xi,\eta)\varphi_j(\xi)\varphi_k(\eta)=\sigma(x,\xi,\eta)\varphi_j(\xi)\varphi_0(2^{-j}\eta), \quad j\geq 0,
    \]
    and
    \[
        \sigma^1_k(x,\xi,\eta)=\sum_{j=0}^{k-1}\sigma(x,\xi,\eta)\varphi_j(\xi)\varphi_k(\eta)=\sigma(x,\xi,\eta)\varphi_0(2^{-k+1}\xi)\varphi_k(\eta), \quad k\geq 1.
    \]
    
    Next let us take $\chi_0,\chi\in\SW(\R^n)$ with $\chi_0$ supported in the ball $\lbrace\abs{\xi}\leq 3\rbrace$ and $\chi$ supported in the ring $\lbrace 1/3\leq\abs{\xi}\leq 3\rbrace$, with $\chi_0$ and $\chi$ being identically one on $\lbrace\abs{\xi}\leq 2\rbrace$ and $\lbrace 1/2\leq\abs{\xi}\leq 2\rbrace$ respectively. Define $\chi_j(\xi):=\chi(2^{-j}\xi)$ for $j\geq 1$, in such a way that the functions $\chi_j$ are identically one in the support of $\varphi_j$ for all $j\geq 0$.
    
    Fixed an integer $j\geq 0$ and $x\in\R^n$, we consider the Fourier series expansion of the compactly supported function
    \begin{equation}\label{fou_ser}
        (\xi,\eta)\mapsto\sigma(x,2^j\xi,2^j\eta)\chi(\xi)\chi_0(\eta).
    \end{equation}
    We can describe $\sigma^0_j$ by
    \[
        \sigma^0_j(x,\xi,\eta)=\sum_{k,\ell\in\Z^n}c_{j,k,\ell}(x)e^{ik(2^{-j}\xi)}\varphi_j(\xi)e^{i\ell(2^{-j}\eta)}\varphi_0(2^{-j}\eta),
    \]
    where $c_{j,k,\ell}(x)$ are the Fourier coefficients of the function in \eqref{fou_ser}, which are given by
    \[
        c_{j,k,\ell}(x)=\frac{1}{(2\pi)^n}\iint\sigma(x,2^j\xi,2^j\eta)\chi(\xi)\chi_0(\eta)e^{-i(k\xi+\ell\eta)}\dd\xi\dd\eta,
    \]
    with $\chi_0(\xi)$ instead of $\chi(\xi)$ when $j=0$.
    
    In addition, we observe that the Fourier coefficients of the function in \eqref{fou_ser} and their derivatives can be bounded uniformly in $j$, $k$ and $\ell$. More precisely, given positive integers $a,b,N$, an integration by parts argument allows us to find $N'\in\N$ such that
    \begin{equation}\label{eqcoef}
        \sup_{j\geq 0,k,\ell\in\Z^n}2^{-j(m+\abs{\alpha})}(1+\abs{k})^a(1+\abs{\ell})^b\norm{\partial^\alpha c_{j,k,\ell}}_\infty\lesssim\norm{\sigma}_{BS^m_{1,1;N'}}
    \end{equation}
    for all multi-indices $\abs{\alpha}\leq N$.
    
    For any positive integers $a',a'',b',b''$ we can write $\sigma_j^0$ as
    \[
        \sum_{k,\ell\in\Z^n}(1+\abs{k})^{-a'}(1+\abs{\ell})^{-b'}\m_j^{(k,\ell)}(x)\psi_j^{(k)}(\xi)\phi_j^{(\ell)}(\eta)
    \]
    where
    \begin{align*}
        &\m_j^{(k,\ell)}(x)=(1+\abs{k})^{a'+a''}(1+\abs{\ell})^{b'+b''}c_{j,k,\ell}(x),\\
        &\psi_j^{(k)}(\xi)=(1+\abs{k})^{-a''}e^{ik(2^{-j}\xi)}\varphi_j(\xi),\\
        &\phi_j^{(\ell)}(\eta)=(1+\abs{\ell})^{-b''}e^{i\ell(2^{-j}\eta)}\varphi_0(2^{-j}\eta),
    \end{align*}
    from where we rewrite the symbol $\sigma^0$ as
    \[
       \sum_{k,\ell\in\Z^n}(1+\abs{k})^{-a'}(1+\abs{\ell})^{-b'}\left(\sum_{j=0}^\infty \m_j^{(k,\ell)}(x)\psi_j^{(k)}(\xi)\phi_j^{(\ell)}(\eta)\right).
    \]
    By using the estimates obtained on the Fourier coefficients in \eqref{eqcoef} it is possible to find, for a given $a,b,N\in\N$, a positive integer $N'$ such that
    \[
        \abs{\partial^\alpha\m_j^{(k,\ell)}(x)}\lesssim 2^{j(m+\abs{\alpha})}(1+\abs{k})^{a'+a''-a}(1+\abs{\ell})^{b'+b''-b}\norm{\sigma}_{BS^m_{1,1;N'}}
    \]
    for all $x\in\R^n$ and $\abs{\alpha}\leq N$. In addition, we can also find $C_N>0$ such that
    \[
        \abs{\partial^\alpha\psi_j^{(k)}(\xi)}\leq C_N 2^{-j\abs{\alpha}}(1+\abs{k})^{-a''+N}
    \]
    and
    \[
        \abs{\partial^\alpha\phi_j^{(\ell)}(\eta)}\leq C_N 2^{-j\abs{\alpha}}(1+\abs{\ell})^{-b''+N}
    \]
    for all $\xi,\eta\in\R^n$ and $\abs{\alpha}\leq N$.  Hence by choosing $a',a'',b',b''$ large enough we can reduce ourselves to the study of a symbol as in Theorem \ref{thm_paper2}. 
    
    Hence, applying Theorem \ref{thm_paper2} with the choice $w=u$ yields the existence of a positive integer $N'$ such that
    \begin{equation*}
        \norm{T_{\sigma^0}(f,g)}_{F_{p,q}^{s,1/u}(\R^n)}\lesssim \norm{\sigma}_{BS_{1,1;N'}^m}\norm{f}_{F_{p,q}^{s+m}(\R^n)}\norm{g}_{X_u(\R^n)}.
    \end{equation*}
    An analogous argument, interchanging the roles of $\xi$ and $\eta$, can be repeated to reduce the study of the symbol $\sigma^1$ to a symbol as in \eqref{main_sym}. We apply Theorem \ref{thm_paper2} with the choice $w=v$ to get the existence of a positive integer $N''$ such that
    \begin{equation*}
        \norm{T_{\sigma^1}(f,g)}_{F_{p,q}^{s,1/v}(\R^n)}\lesssim\norm{\sigma}_{BS_{1,1;N''}^m}\norm{f}_{X_v(\R^n)} \norm{g}_{F_{p,q}^{s+m}(\R^n)}.
    \end{equation*}
    
    Next, using the norm inequality $\norm{f}_{F_{p,q}^{s,1/v}(\R^n)}\leq\norm{f}_{F_{p,q}^{s,1/u}(\R^n)}$, it follows that
    \begin{align*}
        \norm{T_{\sigma}(f,g)}_{F_{p,q}^{s,1/v}(\R^n)}&\lesssim\norm{T_{\sigma^0}(f,g)}_{F_{p,q}^{s,1/u}(\R^n)}+\norm{T_{\sigma^1}(f,g)}_{F_{p,q}^{s,1/v}(\R^n)}\\
        &\lesssim\norm{\sigma}_{BS_{1,1;N}^m}\left(\norm{f}_{F_{p,q}^{s+m}(\R^n)}\norm{g}_{X_u(\R^n)}+\norm{f}_{X_v(\R^n)} \norm{g}_{F_{p,q}^{s+m}(\R^n)}\right),
    \end{align*}
    with $N=\max\{N',N''\}$.
\end{proof}

One of the main consequences that we obtain from Theorem \ref{main_thm}, combined with the previous proposition, is the following boundedness property for bilinear pseudodifferential operators with symbol in the class $BS_{1,1}^m$, on Triebel-Lizorkin spaces.

\begin{thm}\label{cor_BS}
    Let $0<p,q\leq\infty$, $s>\tau_{p,q}$ (see \eqref{tau_p_q}), $m\in\R$ and  $\sigma\in BS_{1,1}^m$. Set either $0<\tilde{p}_i<\infty$ and $0<\tilde{q}_i\leq\infty$ or $\tilde{p}_i=\infty$ and $0<\tilde{q}_i\leq 2$, with $i\in\{1,2\}$. Consider the function $w(t)=(1+\log_+1/t)^{1/r'}$ with $r=\max\{1,\tilde{p}_1,\tilde{p}_2\}$. There exist a constant $C>0$ and a positive integer $N$ such that
    \[
        \norm{T_\sigma(f,g)}_{F_{p,q}^{s,1/w}(\R^n)}\leq C\norm{\sigma}_{BS_{1,1;N}^m}\left(\norm{f}_{F_{p,q}^{s+m}(\R^n)}\norm{g}_{F_{\tilde{p}_1,\tilde{q}_1}^{n/\tilde{p}_1}(\R^n)}+\norm{f}_{F_{\tilde{p}_2,\tilde{q}_2}^{n/\tilde{p}_2}(\R^n)}\norm{g}_{F_{p,q}^{s+m}(\R^n)}\right)
    \]
    for all $f,g\in\SW(\R^n)$.
\end{thm}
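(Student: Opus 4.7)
The plan is to combine Proposition \ref{BS_diag_for_Xw} with the refined Sobolev embedding in Theorem \ref{main_thm}. For $i\in\{1,2\}$ set $r_i:=\max\{1,\tilde{p}_i\}$ and $w_i(t):=(1+\log_+1/t)^{1/r_i'}$. Since the parameters $(\tilde{p}_i,\tilde{q}_i)$ satisfy the hypotheses of Theorem \ref{main_thm}, we have the embeddings
\[
    F_{\tilde{p}_i,\tilde{q}_i}^{n/\tilde{p}_i}(\R^n)\hookrightarrow X_{w_i}(\R^n),\qquad i=1,2.
\]
The weight appearing in the conclusion is $w(t)=(1+\log_+1/t)^{1/r'}$ with $r=\max\{1,\tilde{p}_1,\tilde{p}_2\}=\max\{r_1,r_2\}$, hence $1/r'=\max\{1/r_1',1/r_2'\}$, and both $w_1,w_2,w$ are non-increasing admissible weights (of the type in Example \ref{Prototype_admissible}) with $w_1\leq w$ and $w_2\leq w$ pointwise on $(0,\infty)$.

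Next I would invoke Proposition \ref{BS_diag_for_Xw} with the choice $u=w_1$ and $v=w$ (which is valid because $w_1\leq w$ and both are admissible and non-increasing, matching the hypotheses used in that proposition via Theorem \ref{thm_paper2}). This yields the existence of a positive integer $N$ such that
\[
    \norm{T_\sigma(f,g)}_{F_{p,q}^{s,1/w}(\R^n)}\lesssim \norm{\sigma}_{BS_{1,1;N}^m}\brkt{\norm{f}_{F_{p,q}^{s+m}(\R^n)}\norm{g}_{X_{w_1}(\R^n)}+\norm{f}_{X_w(\R^n)}\norm{g}_{F_{p,q}^{s+m}(\R^n)}}.
\]
The only remaining issue is to replace $\norm{f}_{X_w(\R^n)}$ by $\norm{f}_{F_{\tilde{p}_2,\tilde{q}_2}^{n/\tilde{p}_2}(\R^n)}$. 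Since $w\geq w_2$, inspection of Definition \ref{X_w} shows the continuous inclusion $X_{w_2}(\R^n)\hookrightarrow X_w(\R^n)$ (the BMO part of the norm is unchanged, while the supremum defining the other part decreases when the weight is enlarged). Combining this with the embedding $F_{\tilde{p}_2,\tilde{q}_2}^{n/\tilde{p}_2}(\R^n)\hookrightarrow X_{w_2}(\R^n)$ from Theorem \ref{main_thm} gives $\norm{f}_{X_w(\R^n)}\lesssim\norm{f}_{F_{\tilde{p}_2,\tilde{q}_2}^{n/\tilde{p}_2}(\R^n)}$, and the analogous embedding handles the factor $\norm{g}_{X_{w_1}(\R^n)}$.

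The combination of these three ingredients produces exactly the inequality in the statement. Since every step consists of directly applying results stated earlier in the paper, no genuine obstacle arises; the only point worth care is the compatibility check $w_1,w_2\leq w$, which ensures that the weight that ultimately appears in the target space is the \emph{larger} one (giving the sharpest statement in the scale), while the weights controlling the two factors $f,g$ in $X$-spaces are allowed to be the smaller $w_i$, yielding the refined Sobolev embedding for each argument separately.
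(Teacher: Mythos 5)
Your proof is correct and follows essentially the same route as the paper: apply Theorem \ref{main_thm} to get the embeddings into $X_{w_i}(\R^n)$, observe $w_1,w_2\leq w$ (hence $X_{w_i}(\R^n)\hookrightarrow X_w(\R^n)$), and invoke Proposition \ref{BS_diag_for_Xw}. The only cosmetic difference is your choice $u=w_1$, $v=w$ rather than the paper's $u=v=w$, which changes nothing in the final estimate.
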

\begin{proof}
    As a consequence of Theorem \ref{main_thm}, we obtain the embeddings
    \begin{equation}\label{embed_of_Tri_Xw}
        F_{\tilde{p}_1,\tilde{q}_1}^{n/\tilde{p}_1}(\R^n)\hookrightarrow X_{w_1}(\R^n)\quad \mathrm{and} \quad F_{\tilde{p}_2,\tilde{q}_2}^{n/\tilde{p}_2}(\R^n)\hookrightarrow X_{w_2}(\R^n),
    \end{equation}
    where
    \[
        w_1(t)=(1+\log_+1/t)^{1/\max\{1,\tilde{p}_1\}'} \quad \mathrm{and} \quad w_2(t)=(1+\log_+1/t)^{1/\max\{1,\tilde{p}_2\}'}.
    \]
    Next, we shall notice that
    \[
        w_j(t)\leq 2(1+\log_+1/t)^{1/\max\{1,\tilde{p}_1,\tilde{p}_2\}'}=w(t),
    \]
    for $j=1,2$, so that
    \begin{equation}\label{embed_of_Xw}
        X_{w_1}(\R^n)\hookrightarrow X_w(\R^n) \quad \mathrm{and} \quad X_{w_2}(\R^n)\hookrightarrow X_w(\R^n).
    \end{equation}
    Hence the statement follows by aplying Proposition \ref{BS_diag_for_Xw} with $u=v=w$ jointly with the embeddings in \eqref{embed_of_Xw} and \eqref{embed_of_Tri_Xw}.
\end{proof}

Taking the symbol $\sigma$ to be identically one in the previous theorem, we obtain the following logarithmically subcritical estimate for the product of functions in the Triebel-Lizorkin space with Sobolev-critical exponent.

\begin{cor}\label{Prod_Trie}
    Let $0<p<\infty$ and $0<q\leq \infty$ such that
    \[
        {\min(1,p,q)}>\frac{p}{p+1}.
    \]
    If $r=\max\{ 1,p\}$ then the inequality
    \[
        \norm{fg}_{F_{p,q}^{n/p,1/w}(\R^n)}\lesssim\norm{f}_{F_{p,q}^{n/p}(\R^n)}\norm{g}_{F_{p,q}^{n/p}(\R^n)}
    \]
    holds with $w(t)=(1+\log_+1/t)^{1/r'}$.
\end{cor}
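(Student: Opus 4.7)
The plan is to deduce Corollary \ref{Prod_Trie} as a direct application of Theorem \ref{cor_BS} with the trivial symbol $\sigma\equiv 1$ (which belongs to $BS_{1,1}^0$ with seminorms bounded by a constant) and the parameters $m=0$, $\tilde p_1=\tilde p_2=p$, $\tilde q_1=\tilde q_2=q$, $s=n/p$. Under these choices the bilinear operator $T_\sigma$ is precisely the product, so the conclusion of Theorem \ref{cor_BS} specialises to the desired inequality, with $r=\max\{1,\tilde p_1,\tilde p_2\}=\max\{1,p\}$ and $w(t)=(1+\log_+1/t)^{1/r'}$, exactly as in the corollary.

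The main thing to verify is that the hypothesis $\min(1,p,q)>p/(p+1)$ is equivalent to the condition $s>\tau_{p,q}$ required by Theorem \ref{cor_BS}. With $s=n/p$ and $\tau_{p,q}=n(1/\min(1,p,q)-1)$, the inequality $n/p>n(1/\min(1,p,q)-1)$ is equivalent, after dividing by $n$ and rearranging, to $(p+1)/p>1/\min(1,p,q)$, i.e.\ $\min(1,p,q)>p/(p+1)$. Moreover, since $0<p<\infty$ the requirement of Theorem \ref{cor_BS} that $\tilde p_i=\infty$ force $\tilde q_i\leq 2$ is vacuous, so no further restriction on $q$ is introduced.

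Finally, Theorem \ref{cor_BS} as stated applies to $f,g\in\SW(\R^n)$; to conclude the corollary for arbitrary $f,g\in F_{p,q}^{n/p}(\R^n)$ one uses a standard density/limiting argument, approximating $f$ and $g$ by Schwartz functions in the $F_{p,q}^{n/p}(\R^n)$ quasi-norm (when $p,q<\infty$) and passing to the limit in the bilinear estimate, which is continuous in each argument. The principal content of the corollary is entirely contained in Theorem \ref{cor_BS}; no additional step beyond matching parameters and checking the critical-index algebraic identity is required.
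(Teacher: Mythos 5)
Your proposal is correct and follows exactly the paper's own argument: the corollary is obtained by specialising Theorem \ref{cor_BS} to $\sigma\equiv 1$, $m=0$, $s=n/p$ and $\tilde p_i=p$, $\tilde q_i=q$, and your verification that the hypothesis $\min(1,p,q)>p/(p+1)$ is precisely the condition $n/p>\tau_{p,q}$ is the correct and relevant bookkeeping step.
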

\begin{proof}
    Take $s=n/p$, $m=0$, $\sigma\equiv 1$, $p=\tilde{p}_1=\tilde{p}_2$ and $q=\tilde{q}_1=\tilde{q}_2$ in Theorem \ref{cor_BS}.
\end{proof}

\begin{rem} The result above recovers, for $0<p\leq 1$, the fact that  $F_{p,q}^{n/p}(\R^n)$ is a multiplication algebra. Indeed, it is know that $F_{p,q}^{n/p}(\R^n)$ is an algebra, if and only if $0<p\leq 1$ (see, for instance, \cite{Runst_Sickel}*{Theorem 4.6.4/1}). 

For the case $1<p<\infty$, by Proposition \ref{lifting_inverse_weight}, we show the logarithmically-subcritical estimate
\begin{equation}\label{log_mult}
    \norm{\frac{1}{\Rw}(D)(fg)}_{F_{p,q}^{n/p}(\R^n)}\lesssim\norm{f}_{F_{p,q}^{n/p}(\R^n)}\norm{g}_{F_{p,q}^{n/p}(\R^n)},
\end{equation}
where $\Rw$ denotes the regularisation of $w$ given by \eqref{log_symbol}. Notice that, for all $\epsilon>0$, $\esc{D}^{-\epsilon}{\Rw}(D)$ satisfies \eqref{S0}. This yields
\[
        \norm{fg}_{F_{p,q}^{n/p-\epsilon}(\R^n)}\lesssim\norm{f}_{F_{p,q}^{n/p}(\R^n)}\norm{g}_{F_{p,q}^{n/p}(\R^n)},
\]
which recovers the known multiplication result for Triebel-Lizorkin spaces (see e.g.\cite{BaaskeSchmeisser}*{Proposition 2.3}).
\end{rem}

The following proposition, which is an Euclidean version of \cite{El-Fallah}*{Theorem 1.3.1}, states the sharpness of Theorem \ref{cor_BS}, in the sense that, in general, it is not possible to improve the exponent of the logarithmic weight. 
\begin{prop}\label{prop:sharpness}
    Let $\gamma<1/2$ and consider the weight $w(t):=(1+\log_+1/t)^{-\gamma}$. Then, there exists a function $f\in F^{n/2}_{2,2}(\R^n)$ for which $\norm{f^2}_{F^{n/2,w}_{2,2}(\R^n)}=\infty$. 
\end{prop}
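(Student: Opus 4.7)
The plan is to construct, for each $\gamma<1/2$, an explicit lacunary superposition of spatial dilates of a single Schwartz function whose square has divergent $F^{n/2,w}_{2,2}(\R^n)$-norm. I would fix $\psi\in\SW(\R^n)$ with $\hat\psi$ real, nonnegative, radial, and supported in a thin annulus $\{1<|\xi|<3/2\}$, so that in particular $\psi(0)=(2\pi)^{-n}\int\hat\psi>0$; setting $g_k(x):=\psi(2^k x)$ then gives $\widehat{g_k}(\xi)=2^{-kn}\hat\psi(2^{-k}\xi)$, which is supported in $\{2^k<|\xi|<(3/2)\cdot 2^k\}$. With a parameter $\beta>1/2$ and coefficients $c_k:=1/(\sqrt{k}\,(\log k)^\beta)$, I would define
$$f:=\sum_{k\geq k_0}c_k g_k.$$
Because the $\widehat{g_k}$ have pairwise disjoint dyadic Fourier supports, Plancherel gives $\|f\|_{F^{n/2}_{2,2}(\R^n)}^2\approx\sum_k c_k^2\|g_k\|_{F^{n/2}_{2,2}(\R^n)}^2\approx\sum_k c_k^2<\infty$, and hence $f\in F^{n/2}_{2,2}(\R^n)$.

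Expanding $f^2=\sum_{k,j}c_k c_j g_k g_j$, the decisive step is to isolate a coherent principal contribution to $\varphi_\ell(D)(f^2)$ at each dyadic scale $\ell$. For $k<\ell$, the convolution identity $\widehat{g_k g_\ell}=(2\pi)^{-n}\widehat{g_k}*\widehat{g_\ell}$ combined with the change of variable $\xi=2^k u$ yields
$$\widehat{g_k g_\ell}(\eta)=(2\pi)^{-n}2^{-\ell n}\!\int\hat\psi(u)\,\hat\psi\!\left(2^{-\ell}\eta-2^{k-\ell}u\right)du,$$
and a Taylor expansion of $\hat\psi$ around $2^{-\ell}\eta$ produces the decomposition
$$\widehat{g_k g_\ell}=\psi(0)\,\widehat{g_\ell}+E_{k,\ell},\qquad \|E_{k,\ell}\|_{L^2(\R^n)}\lesssim 2^{-(\ell-k)}\|g_\ell\|_{L^2(\R^n)},$$
with $E_{k,\ell}$ still localized in the dyadic shell of $\widehat{g_\ell}$. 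Summing the $j=\ell$ cross terms of $f^2$ then isolates a \emph{coherent} amplification $2\psi(0)\,c_\ell(\sum_{k<\ell}c_k)\,g_\ell$ plus an accumulated error, while every other piece of $\varphi_\ell(D)(f^2)$ (the diagonal $\sum_k c_k^2\varphi_\ell(D)g_k^2$ and the cross terms with $j\neq\ell$) would turn out to be of strictly lower order. Since $\sum_k c_k\,2^{-(\ell-k)}=O(1)$ while $\sum_{k<\ell}c_k\to\infty$, the principal term dominates for all $\ell$ large enough, and I would deduce
$$\|\varphi_\ell(D)(f^2)\|_{L^2(\R^n)}^2\gtrsim c_\ell^2\Big(\sum_{k<\ell}c_k\Big)^2\|\varphi_\ell(D)g_\ell\|_{L^2(\R^n)}^2\approx 2^{-\ell n}\,c_\ell^2\Big(\sum_{k<\ell}c_k\Big)^2.$$

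Plugging this into the defining Littlewood--Paley sum of $\|\cdot\|_{F^{n/2,w}_{2,2}(\R^n)}$, using $w(2^{-\ell})^2\approx\ell^{-2\gamma}$, and invoking the elementary asymptotic $\sum_{k<\ell}c_k\approx 2\sqrt{\ell}/\log^\beta\ell$, I would obtain
$$\|f^2\|_{F^{n/2,w}_{2,2}(\R^n)}^2\gtrsim\sum_{\ell\gg 1}\frac{\ell^{-2\gamma}}{\log^{4\beta}\ell},$$
and this series diverges for every $\gamma<1/2$. The hard part will be carrying out the coherent-versus-incoherent split quantitatively: one must verify that the accumulated error $\sum_{k<\ell}c_k E_{k,\ell}$ remains dominated by the amplified coherent term $\psi(0)(\sum_{k<\ell}c_k)g_\ell$, and that the diagonal pieces $c_k^2 g_k^2$ (whose Fourier supports spread over all scales $\leq 2^{k+1}$) do not cancel the principal contribution at scale $\ell$. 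Both points reduce to careful dyadic bookkeeping, exploiting the geometric decay $2^{-(\ell-k)}$ of the Taylor remainder against the boundedness of the weights $c_k$.
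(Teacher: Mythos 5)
Your proposal uses a genuinely different construction from the paper's, and it would work, but it is considerably more involved than necessary. The paper's argument is direct: take $f_\delta$ with $\widehat{f_\delta}(\xi)=g_\delta(\xi)=\chi_{\{|\xi|>e\}}|\xi|^{-n}\log^{-\delta}|\xi|\geq 0$, use $F^{n/2}_{2,2}(\R^n)=L^2_{n/2}(\R^n)$ and Proposition \ref{Lifting_for_infty} to write the target norm as a weighted $L^2$ integral against $|\widehat{f_\delta}\ast\widehat{f_\delta}|^2$, and bound the convolution $\widehat{f_\delta}\ast\widehat{f_\delta}(\xi)$ from below pointwise by restricting the domain of integration and using monotonicity. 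The radial, nonnegative, explicit form of $g_\delta$ makes the convolution lower bound a one-line calculus estimate, and divergence of the resulting integral is then an elementary check in the exponent ($\gamma\leq 3/2-2\delta$, attainable for any $\gamma<1/2$ by choosing $\delta\in(1/2,3/4-\gamma/2]$).

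Your lacunary construction achieves the same arithmetic ($\sum_\ell \ell^{-2\gamma}\log^{-4\beta}\ell=\infty$ for $\gamma<1/2$, $\beta>1/2$), and as a bonus a single $f$ works for all $\gamma<1/2$ at once. However, you are doing more work than you need to. Having chosen $\hat\psi\geq 0$, $c_k>0$, you automatically have $\hat f=\sum c_k\widehat{g_k}\geq 0$ and hence $\widehat{f^2}=(2\pi)^{-n}\hat f\ast\hat f\geq 0$ pointwise. Consequently the entire ``coherent versus incoherent'' split, the Taylor remainder estimate $\norm{E_{k,\ell}}_{L^2}\lesssim 2^{-(\ell-k)}\norm{g_\ell}_{L^2}$, and the worry about whether the diagonal pieces $c_k^2 g_k^2$ could ``cancel'' the principal term are all unnecessary: since every cross term in the double sum $\widehat{f^2}(\eta)=(2\pi)^{-n}\sum_{k,j}c_kc_j(\widehat{g_k}\ast\widehat{g_j})(\eta)$ is nonnegative, you may simply discard all of them but the ones you can bound from below (the $j=\ell$, $k<\ell$ terms), and then integrate $\varphi_\ell^2|\widehat{f^2}|^2$ directly via Plancherel. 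This is precisely the simplification the paper's choice of $g_\delta$ exploits. As written, your proof is a plausible but incomplete sketch (you explicitly defer ``careful dyadic bookkeeping''); exploiting positivity would finish it in a few lines and would bring your argument much closer in spirit to the paper's.
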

\begin{proof}
    Let $\delta>1/2$ and consider the function
    \[
        f_\delta(x):=\int_{\abs{\xi}\geq e}\frac{1}{\abs{\xi}^n\log^\delta\abs{\xi}}e^{ix\xi}\ddd\xi.
    \]
    We shall notice that $f_\delta=\mathcal{F}^{-1}g_\delta$ with
    \[
        g_\delta(\xi)=\frac{\chi_{\{\abs{\xi}>e\}}(\xi)}{\abs{\xi}^n \log^\delta\abs{\xi}}.
    \]
    By using Plancherel's Theorem, we see that
    \[
        \norm{f_\delta}_{L^2_{n/2}(\R^n)}^2=\int_{\R^n}\langle\xi\rangle^n\abs{g_\delta(\xi)}^2\dd\xi\approx\int_{\abs{\xi}>e}\frac{1}{\abs{\xi}^n\log^{2\delta}\abs{\xi}}\dd\xi\approx\int_1^\infty\frac{1}{r^{2\delta}}\dd r,
    \]
    where the last integral is finite since $\delta>1/2$. We deduce than that $f\in F_{2,2}^{n/2}(\R^n)$.

We observe that, for any $\abs{\xi}\geq 2e$, it holds that
    \begin{align}\label{ecu_conv_f_del}
        (\widehat{f_\delta}\ast\widehat{f_\delta})(\xi)&=\int_{\abs{\eta}>e}\frac{\chi_{\{x\in\R^n:\abs{x}>e\}}(\xi-\eta)}{\abs{\eta}^n\log^\delta\abs{\eta}\abs{\xi-\eta}^n\log^\delta\abs{\xi-\eta}}\dd\eta\nonumber\\
        &\geq\int_{e<\abs{\eta}<\abs{\xi}-e}\frac{1}{\abs{\eta}^n\log^\delta\abs{\eta}\abs{\xi-\eta}^n\log^\delta\abs{\xi-\eta}}\dd\eta\nonumber\\
        &\geq\frac{1}{(2\abs{\xi}-e)^n\log^\delta(2\abs{\xi}-e)}\int_{e<\abs{\eta}<\abs{\xi}-e}\frac{1}{\abs{\eta}^n\log^\delta\abs{\eta}}\dd\eta\nonumber\\
        &\approx\frac{1}{(2\abs{\xi}-e)^n\log^\delta(2\abs{\xi}-e)}\frac{\log^{1-\delta}(\abs{\xi}-e)-1}{1-\delta}.
    \end{align}
    Let us denote by $\Rw$ the regularisation of the admissible weight $w$ given by \eqref{log_symbol}. 
    Next, we use Proposition \ref{Lifting_for_infty}. the identification $F_{2,2}^{n/2}(\R^n)=L^2_{n/2}(\R^n)$, Plancherel's Theorem, Remark \ref{reg_equiv_weight} and \eqref{ecu_conv_f_del} to get that
    \begin{align*}
        \norm{f_\delta^2}^2_{F_{2,2}^{n/2,w}(\R^n)}&\approx\norm{\Rw(D)f_\delta^2}^2_{L^2_{n/2}(\R^n)}=\int_\R\frac{\langle\xi\rangle^n}{\Rw(\xi)^2}\abs{(\widehat{f_\delta}\ast\widehat{f_\delta})(\xi)}^2\dd\xi\\
        &\geq\int_{\abs{\xi}\geq 2e}\frac{\langle\xi\rangle^n}{\Rw^\gamma(\xi)^2}\abs{(\widehat{f_\delta}\ast\widehat{f_\delta})(\xi)}^2\dd\xi\\
        &\approx\int_{\abs{\xi}\geq 2e}\frac{\langle\xi\rangle^n}{(1+\log\xi)^{2\gamma}}\abs{(\widehat{f_\delta}\ast\widehat{f_\delta})(\xi)}^2\dd\xi\\
        &\geq\int_{\abs{\xi}\geq 2e}\frac{\langle\xi\rangle^n}{(1+\log\xi)^{2\gamma}}\left(\frac{\log^{1-\delta}(\abs{\xi}-e)-1}{(1-\delta)(2\abs{\xi}-e)^n\log^\delta(2\abs{\xi}-e)}\right)^2\dd\xi\\
        &\approx\int_{\abs{\xi}\geq 2e}\frac{1}{\abs{\xi}^n}\log^{2-4\delta-2\gamma}\abs{\xi}\dd\xi.
    \end{align*}
    Notice that the last integral diverges if, and only if
    \begin{equation}\label{equ_ind_div}
        2-4\delta-2\gamma+1\geq 0\Leftrightarrow\gamma\leq \frac{3}{2}-2\delta.
    \end{equation}
    In particular, if $\gamma< 1/2$ then it is possible to find $\delta>1/2$ so that \eqref{equ_ind_div} is satisfied.
    \end{proof}

\newpage
\subsection{Some consequences and applications}\label{last_section}
\subsubsection{Local Well-Posedness of some PDE's with logaritmically supercritical non-linearities.}  
Consider the following type of non-linear families of equations 
    \[
        i\partial_t u+\brkt{\sqrt{-\Delta}}^{s}u=\mathcal{N}(u,u)\quad s>0.
    \]
where $\mathcal{N}(u,v)=T_m(T_{\sigma}(u,v))$, with $\sigma\in BS^0_{1,1}(\R^ n)$ and $m$ is a linear Fourier multiplier satisfying that
\[
    m(\xi)\lesssim (1+\log_+ \abs{\xi})^{-1/2}, \quad \xi\in\R^n,
\]

For different values of the parameter $s$, the equation above recovers some important equation in mathematical physics. For instance, in the $s=2$, it becomes the well-known Schr\"odinger equation; for $s=4$, the equation becomes the biharmonic Schr\"odinger equation; for $s=1$, the equation is the half-wave equation; and for $0<s<2$ and $s\neq 1$, the equation is the fractional Schr\"odinger equation. 
\begin{cor}\label{non_linear_pdes}
Let $w(t)=(1+\log_+1/t)^{1/2}$ and denote by $\Rw$ its regularisation given by \eqref{log_symbol}. For all $u_0\in L_{n/2}^2(\R^n)$, there exists $T=T(u_0)>0$ and a unique $u\in \mathcal{C}\brkt{[0,T],L_{n/2}^2(\R^n)}$ solving the IVP
\[
    \begin{cases}
        i\partial_t u+\brkt{\sqrt{-\Delta}}^{s}u=\mathcal{N}(u,u)\\ u(t,0)=u_0.
    \end{cases}
\]
\end{cor}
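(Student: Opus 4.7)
The natural approach is a Duhamel--Picard fixed-point argument on the space $X_T:=\mathcal{C}([0,T],L^2_{n/2}(\R^n))$ endowed with the sup-in-time norm. Rewriting the IVP as
\[
u(t) = e^{it(\sqrt{-\Delta})^s} u_0 - i\int_0^t e^{i(t-\tau)(\sqrt{-\Delta})^s} \mathcal{N}(u(\tau),u(\tau))\,\dd\tau =: \Phi(u)(t),
\]
I would run the contraction on the closed ball $B_R=\{u\in X_T:\norm{u}_{X_T}\leq R\}$ with $R:=2\norm{u_0}_{L^2_{n/2}}$. Since $e^{it(\sqrt{-\Delta})^s}$ has unimodular Fourier symbol $e^{it\abs{\xi}^s}$, by Plancherel it is an isometry on $L^2_{n/2}(\R^n)$ and strongly continuous in $t$; this handles the free term and the continuity in time of $\Phi(u)$.

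The heart of the proof is the bilinear estimate
\begin{equation}\label{keyest}
\norm{\mathcal{N}(u,v)}_{L^2_{n/2}(\R^n)} \lesssim \norm{u}_{L^2_{n/2}(\R^n)}\norm{v}_{L^2_{n/2}(\R^n)}.
\end{equation}
Recalling the identification $L^2_{n/2}(\R^n)=F_{2,2}^{n/2}(\R^n)$, I would first apply Theorem \ref{cor_BS} with $p=q=\tilde p_1=\tilde p_2=\tilde q_1=\tilde q_2=2$, $s_{\mathrm{th}}=n/2$ and $m=0$ (note $\tau_{2,2}=0$ and $r=\max\{1,2,2\}=2$, so $r'=2$), to obtain
\[
\norm{T_\sigma(u,v)}_{F_{2,2}^{n/2,1/w}(\R^n)} \lesssim \norm{\sigma}_{BS^0_{1,1;N}}\norm{u}_{L^2_{n/2}(\R^n)}\norm{v}_{L^2_{n/2}(\R^n)},\qquad w(t)=(1+\log_+ 1/t)^{1/2}.
\]
Next, I would prove that $T_m$ maps $F_{2,2}^{n/2,1/w}(\R^n)$ continuously into $L^2_{n/2}(\R^n)$. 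For this, Proposition \ref{lifting_inverse_weight} (applied to the admissible weight $w$ with $\lambda=-1$) together with Plancherel gives
\[
\norm{h}_{F_{2,2}^{n/2,1/w}(\R^n)}^2 \approx \int_{\R^n} \esc{\xi}^n \Rw(\xi)^{-2} \abs{\widehat{h}(\xi)}^2 \dd\xi,
\]
and by Remark \ref{reg_equiv_weight} we have $\Rw(\xi)\approx (1+\log_+\abs{\xi})^{1/2}$. Combining this with the hypothesis $\abs{m(\xi)}\lesssim (1+\log_+\abs{\xi})^{-1/2}$ yields the pointwise bound $\abs{m(\xi)}^2\lesssim \Rw(\xi)^{-2}$, whence
\[
\norm{T_m h}_{L^2_{n/2}(\R^n)}^2 = \int_{\R^n}\esc{\xi}^n \abs{m(\xi)}^2\abs{\widehat{h}(\xi)}^2\dd\xi \lesssim \norm{h}_{F_{2,2}^{n/2,1/w}(\R^n)}^2.
\]
Composing the two estimates gives \eqref{keyest}.

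Once \eqref{keyest} is in hand, the conclusion is standard. Using the isometry property of the propagator and \eqref{keyest},
\[
\norm{\Phi(u)}_{X_T}\leq \norm{u_0}_{L^2_{n/2}(\R^n)} + CT\norm{u}_{X_T}^2,
\]
and from $\mathcal{N}(u,u)-\mathcal{N}(v,v)=\mathcal{N}(u-v,u)+\mathcal{N}(v,u-v)$ I would deduce
\[
\norm{\Phi(u)-\Phi(v)}_{X_T}\leq CT(\norm{u}_{X_T}+\norm{v}_{X_T})\norm{u-v}_{X_T}.
\]
Choosing $T>0$ with $CTR\leq 1/4$, the map $\Phi$ sends $B_R$ into itself and is a $\tfrac12$-contraction, so Banach's fixed-point theorem produces the unique $u\in\mathcal{C}([0,T],L^2_{n/2}(\R^n))$ solving the IVP. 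The main obstacle is precisely \eqref{keyest}: the logarithmic loss paid by $T_\sigma$ (the weight $1/w$ in Theorem \ref{cor_BS}) has to be absorbed by the logarithmic decay of $m$, and the matching of exponents $1/r'=1/2$ (from $p=2$ in Theorem \ref{cor_BS}) with the $(1+\log_+\abs{\xi})^{-1/2}$ hypothesis on $m$ is what makes the estimate critical and the corollary work.
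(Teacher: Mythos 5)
Your proof is correct and follows the same Duhamel--Picard contraction scheme the paper relies on. You actually make the key step more precise than the paper's one-line proof: the paper cites \eqref{log_mult}, which as written covers only the product case $\sigma\equiv 1$, whereas the nonlinearity $\mathcal{N}(u,v)=T_m(T_\sigma(u,v))$ involves a general $\sigma\in BS_{1,1}^0$; your combination of Theorem \ref{cor_BS} (with $p=q=\tilde p_i=\tilde q_i=2$, $m=0$, so $r'=2$) and the Plancherel argument showing $T_m\colon F_{2,2}^{n/2,1/w}(\R^n)\to L^2_{n/2}(\R^n)$ via Proposition \ref{lifting_inverse_weight}, Remark \ref{reg_equiv_weight} and the logarithmic decay of $m$ is exactly the bilinear estimate that is needed, and the remaining fixed-point bookkeeping is standard.
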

\begin{proof}
    Using the estimate in \eqref{log_mult}, one can show, by a standard argument, that the operator
    \[
        \mathcal{T}_{u_0}(t,u):=e^{it(\sqrt{-\Delta})^s}u_0-i\int_0^t e^{i(t-r)(\sqrt{-\Delta})^s}\left[\mathcal{N}(u,u)\right]\dd r
    \]
    is a contraction on a ball $B$ of radius $R=R(u_0)$ in $\mathcal{C}\brkt{[-T,T],L_{n/2}^2(\R^n)}$, for $T=T(u_0)$ small enough. The statement follows then by applying the Banach fixed point Theorem.
\end{proof}

\subsubsection{Regularity of global solutions to logarithmic equations}

In this section we study the regularity of solutions to  logarithmic equations related to the operator 
    \[
        I+(I-\Delta)^{\mathrm{log}},
    \]
    where $I$ denotes the identity and $(I-\Delta)^{\mathrm{log}}$ stands for the logarithmic Schr\"odinger operator, whose symbol is given by the function $\log(1+\abs{\xi}^2)$. This last operator has been studied by several authors (see \cite{Feulefack} and the references therein).
    
\begin{cor}\label{log_sch_op}
    Let $0<p<\infty$ and $0<q\leq \infty$ such that
    \[
        {\min(1,p,q)}>\frac{p}{p+1}.
    \]

    Given $m\in\R$, take a symbol $\sigma$ in the class $BS_{1,1}^m$. Consider the equation
    \begin{equation}\label{Forca_Barca}
        v(D)u(x)=T_\sigma(f,g)(x),\quad x\in\R^n,
    \end{equation}
    where $v(\xi):=1+\log(1+\abs{\xi}^2)$. Then the function
    \[
        u=\frac{1}{v}(D)\left[T_\sigma(f,g)\right]
    \]
    solves \eqref{Forca_Barca} and satisfies
    \[
        \norm{u}_{F_{p,q}^{n/p,\omega}(\R^n)}\lesssim\norm{\sigma}_{BS_{1,1;N}^m}\norm{f}_{F_{p,q}^{n/p+m}(\R^n)}\norm{g}_{F_{p,q}^{n/p+m}(\R^n)},
    \]
    for some positive integer $N$, where $\omega(t)=(1+\log_+1/t)^{1/p}$.
\end{cor}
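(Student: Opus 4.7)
That $u=(1/v)(D)[T_\sigma(f,g)]$ solves \eqref{Forca_Barca} is immediate: since $v(\xi)\ge 1$, its reciprocal $1/v$ is a bounded smooth symbol, so $(1/v)(D)$ is a well-defined Fourier multiplier and $v(D)\circ (1/v)(D)=I$ on $\mathcal{S}'(\R^n)$. The substantive task is the norm estimate, which I plan to obtain in two steps: first, control $T_\sigma(f,g)$ at the critical Sobolev index $n/p$ with a logarithmic weight; second, transfer this bound to $u$ using the lifting properties together with a Fourier multiplier argument.

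For the first step, apply Theorem \ref{cor_BS} with $s=n/p$ and $\tilde{p}_1=\tilde{p}_2=p$, $\tilde{q}_1=\tilde{q}_2=q$. The condition $s>\tau_{p,q}$ is precisely the hypothesis $\min(1,p,q)>p/(p+1)$. Together with the Sobolev embedding $F_{p,q}^{n/p+m}\hookrightarrow F_{p,q}^{n/p}$ (for $m\ge 0$) to simplify the mixed right-hand side, this gives
\begin{equation*}
    \norm{T_\sigma(f,g)}_{F_{p,q}^{n/p,1/w}(\R^n)}\lesssim\norm{\sigma}_{BS_{1,1;N}^m}\norm{f}_{F_{p,q}^{n/p+m}(\R^n)}\norm{g}_{F_{p,q}^{n/p+m}(\R^n)},
\end{equation*}
with $w(t)=(1+\log_+1/t)^{1/r'}$ and $r=\max\{1,p\}$. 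Writing $\ell(t):=1+\log_+1/t$, observe that $1/w=\ell^{1/p-1}$ and $\omega=\ell^{1/p}$ are both powers of $\ell$.

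For the second step, Proposition \ref{lifting_inverse_weight} (with $\lambda=1/p$, and separately with $\lambda=1/p-1$; the degenerate case $\lambda=0$ at $p=1$ is vacuous since then $1/w\equiv 1$) provides
\begin{align*}
    \norm{u}_{F_{p,q}^{n/p,\omega}(\R^n)}&\approx\norm{\Rw^{1/p}(D)u}_{F_{p,q}^{n/p}(\R^n)},\\
    \norm{T_\sigma(f,g)}_{F_{p,q}^{n/p,1/w}(\R^n)}&\approx\norm{\Rw^{1/p-1}(D)T_\sigma(f,g)}_{F_{p,q}^{n/p}(\R^n)},
\end{align*}
where $\Rw$ denotes the regularization of $\ell$. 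Using $u=(1/v)(D)T_\sigma(f,g)$ and the symbol factorization $\Rw^{1/p}/v=(\Rw/v)\cdot\Rw^{1/p-1}$,
\[
    \Rw^{1/p}(D)u=(\Rw/v)(D)\,\Rw^{1/p-1}(D)T_\sigma(f,g).
\]
By Remark \ref{reg_equiv_weight} one has $v(\xi)\approx\Rw(\xi)$ pointwise; combining the derivative bounds on $\Rw$ from Proposition \ref{cae-mou-lifting} with analogous elementary estimates on $v$ and $1/v$ yields $\abs{\partial^\alpha(\Rw/v)(\xi)}\lesssim\esc{\xi}^{-\abs{\alpha}}$ for every multi-index $\alpha$, so $\Rw/v$ satisfies condition \eqref{S0}. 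Lemma \ref{KN_bounded} then provides the boundedness of $(\Rw/v)(D)$ on $F_{p,q}^{n/p}(\R^n)$, and chaining the previous estimates yields the claimed inequality.

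The main obstacle I expect is the verification of condition \eqref{S0} for the ratio $\Rw/v$: this is exactly the device that converts the $\ell^{1/p}$-lifting needed for the target weight $\omega$ into the $\ell^{1/p-1}$-lifting already supplied by Theorem \ref{cor_BS}, and it requires careful bookkeeping with the Leibniz rule starting from the log-type bounds on $\Rw$ in Proposition \ref{cae-mou-lifting} and analogous estimates on $v$ derived from its explicit form.
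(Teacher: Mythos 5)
Your argument is essentially the paper's own proof: bound $T_\sigma(f,g)$ via Theorem \ref{cor_BS}, lift both sides with Proposition \ref{lifting_inverse_weight} (using the factorisation $\Rw^{1/p}/v=(\Rw/v)\cdot\Rw^{1/p-1}$, with $1/p-1=-1/p'$), and absorb the ratio $\Rw/v$ through Lemma \ref{KN_bounded} after verifying condition \eqref{S0}. The caveats you flag explicitly --- that passing to the symmetric right-hand side uses $m\ge 0$, and that the identity $1/w=(1+\log_+1/t)^{1/p-1}$ presupposes $p\ge 1$ --- are points the paper's write-up glosses over, but they do not alter the route.
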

\begin{proof}
    Let us denote by $\Rw$ the regularisation in \eqref{log_symbol} of the admissible weight $w(t)=1+\log_+1/t$. Using the fact that
    \[
        \Rw(\xi)\approx w(1/\abs{\xi})\approx v(\xi),
    \]
    one can show that 
    \begin{equation}\label{equiv_weights}
        \abs{\partial^\alpha\left(\frac{\Rw}{v}\right)(\xi)}\lesssim\langle\xi\rangle^{-\abs{\alpha}}
    \end{equation}
    for all multi-index $\alpha\in\N^n$ and all $\xi\in\R^n$. Therefore by using Proposition \ref{lifting_inverse_weight} and Lemma \ref{KN_bounded} we see that
    \begin{align*}
        \norm{u}_{F_{p,q}^{n/p,\omega}(\R^n)}\approx\norm{\frac{\Rw}{v}(D)\left[\frac{1}{\Rw^{1/p'}}(D)T_\sigma(f,g)\right]}_{F_{p,q}^{n/p}(\R^n)}\lesssim\norm{\frac{1}{\Rw^{1/p'}}(D)T_\sigma(f,g)}_{F_{p,q}^{n/p}(\R^n)}.
    \end{align*}
    Then Proposition \ref{lifting_inverse_weight} and Theorem \ref{cor_BS} yield
    \[
        \norm{\frac{1}{\Rw^{1/p'}}(D)T_\sigma(f,g)}_{F_{p,q}^{n/p}(\R^n)}\lesssim\norm{\sigma}_{BS_{1,1;N}^m}\norm{f}_{F_{p,q}^{n/p+m}(\R^n)}\norm{g}_{F_{p,q}^{n/p+m}(\R^n)},
    \]
    from where the result follows.
\end{proof}

\begin{rem}
    We notice that the property in \eqref{equiv_weights} is also satisfied for the reciprocal quotient, that is,
    \[
        \partial^\alpha\left(\frac{v}{\Rw}\right)(\xi)\lesssim\langle\xi\rangle^{-\abs{\alpha}}
    \]
    for any multi-index $\alpha\in\N^n$ and any $\xi\in\R^n$. Hence, in view of Propositions \ref{Lifting_for_infty} and \ref{lifting_inverse_weight}, we have the norm equivalences
    \[
        \norm{f}_{F_{p,q}^{n/p,w}(\R^n)}\approx\norm{v(D)f}_{F_{p,q}^{n/p}(\R^n)}
    \]
    and
    \[
        \norm{f}_{F_{p,q}^{n/p,1/w}(\R^n)}\approx\norm{\frac{1}{v}(D)f}_{F_{p,q}^{n/p}(\R^n)},
    \]
    for the range of indexes as in Corollary \ref{log_sch_op}.
\end{rem}

\begin{bibdiv}
\begin{biblist}
\bib{Paper1}{article}{
   author={Arias, Sergi},
   author={Rodr\'{\i}guez-L\'{o}pez, Salvador},
   title={Some endpoint estimates for bilinear Coifman-Meyer multipliers},
   journal={J. Math. Anal. Appl.},
   volume={498},
   date={2021},
   number={2},
   pages={124972, 27},

}
\bib{Paper2}{article}{
   author={Arias, Sergi},
   author={Rodr\'{\i}guez-L\'{o}pez, Salvador},
   title={Endpoint estimates for bilinear pseudodifferential operators with
   symbol in $B S_{1, 1}^m$},
   journal={J. Math. Anal. Appl.},
   volume={515},
   date={2022},
   number={1},
   pages={Paper No. 126453, 23},

}
\bib{BaaskeSchmeisser}{article}{
   author={Baaske, Franka},
   author={Schmeisser, Hans-J\"{u}rgen},
   title={On the existence and uniqueness of mild and strong solutions of a
   generalized nonlinear heat equation},
   journal={Z. Anal. Anwend.},
   volume={38},
   date={2019},
   number={3},
   pages={287--308},
   issn={0232-2064},

}
\bib{ben-nah-tor}{article}{
   author={B\'{e}nyi, \'{A}rp\'{a}d},
   author={Nahmod, Andrea R.},
   author={Torres, Rodolfo H.},
   title={Sobolev space estimates and symbolic calculus for bilinear
   pseudodifferential operators},
   journal={J. Geom. Anal.},
   volume={16},
   date={2006},
   number={3},
   pages={431--453},

}
\bib{ben-tor}{article}{
   author={B\'{e}nyi, \'{A}rp\'{a}d},
   author={Torres, Rodolfo H.},
   title={Symbolic calculus and the transposes of bilinear
   pseudodifferential operators},
   journal={Comm. Partial Differential Equations},
   volume={28},
   date={2003},
   number={5-6},
   pages={1161--1181},

}
\bib{cae-mou}{article}{
   author={Caetano, Ant\'{o}nio M.},
   author={Moura, Susana D.},
   title={Local growth envelopes of spaces of generalised smoothness: the
   subcritical case},
   journal={Math. Nachr.},
   volume={273},
   date={2004},
   pages={43--57},
   
}

\bib{dominguez2018function}{article}{
   author={Dom{\'\i}nguez, Oscar},
   author={Tikhonov, Sergey},
   title={Function spaces of logarithmic smoothness: embeddings and characterizations},
  journal={to appear in Mem. Amer. Math. Soc., arXiv preprint arXiv:1811.06399},
  year={2018}
}
\bib{El-Fallah}{book}{
   author={El-Fallah, Omar},
   author={Kellay, Karim},
   author={Mashreghi, Javad},
   author={Ransford, Thomas},
   title={A primer on the Dirichlet space},
   series={Cambridge Tracts in Mathematics},
   volume={203},
   publisher={Cambridge University Press, Cambridge},
   date={2014},
   pages={xiv+211},
   isbn={978-1-107-04752-5},
}
\bib{Feulefack}{article}{
   author={Feulefack, Pierre Aime},
   title={The logarithmic Schr\"{o}dinger operator and associated Dirichlet
   problems},
   journal={J. Math. Anal. Appl.},
   volume={517},
   date={2023},
   number={2},
   pages={Paper No. 126656, 33},

}
\bib{Goldberg}{article}{
   author={Goldberg, David},
   title={A local version of real Hardy spaces},
   journal={Duke Math. J.},
   volume={46},
   date={1979},
   number={1},
   pages={27--42},

}
\bib{Grafakos-Oh}{article}{
   author={Grafakos, Loukas},
   author={Oh, Seungly},
   title={The Kato-Ponce inequality},
   journal={Comm. Partial Differential Equations},
   volume={39},
   date={2014},
   number={6},
   pages={1128--1157},
   issn={0360-5302},
}
\bib{Gra-Mal_Nai}{article}{
   author={Grafakos, Loukas},
   author={Maldonado, Diego},
   author={Naibo, Virginia},
   title={A remark on an endpoint Kato-Ponce inequality},
   journal={Differential Integral Equations},
   volume={27},
   date={2014},
   number={5-6},
   pages={415--424},
   issn={0893-4983},
}
\bib{koe-tom}{article}{
   author={Koezuka, Kimitaka},
   author={Tomita, Naohito},
   title={Bilinear pseudodifferential operators with symbols in
   $BS^m_{1,1}$ on Triebel-Lizorkin spaces},
   journal={J. Fourier Anal. Appl.},
   volume={24},
   date={2018},
   number={1},
   pages={309--319},

}
\bib{Marschall}{article}{
   author={Marschall, J\"{u}rgen},
   title={On the boundedness and compactness of nonregular
   pseudo-differential operators},
   journal={Math. Nachr.},
   volume={175},
   date={1995},
   pages={231--262},
   issn={0025-584X},

}
\bib{mou}{article}{
   author={Moura, Susana},
   title={Function spaces of generalised smoothness},
   journal={Dissertationes Math. (Rozprawy Mat.)},
   volume={398},
   date={2001},
   pages={88},

}
\bib{Naibo}{article}{
   author={Naibo, Virginia},
   title={On the bilinear H\"{o}rmander classes in the scales of
   Triebel-Lizorkin and Besov spaces},
   journal={J. Fourier Anal. Appl.},
   volume={21},
   date={2015},
   number={5},
   pages={1077--1104},
   issn={1069-5869},
}
\bib{Naibo_notices}{article}{
   author={Naibo, Virginia},
   title={Bilinear pseudodifferential operators and the H\"{o}rmander classes},
   journal={Notices Amer. Math. Soc.},
   volume={68},
   date={2021},
   number={7},
   pages={1119--1130},
   issn={0002-9920},
}
\bib{Naibo-Thomson}{article}{
   author={Naibo, Virginia},
   author={Thomson, Alexander},
   title={Coifman-Meyer multipliers: Leibniz-type rules and applications to
   scattering of solutions to PDEs},
   journal={Trans. Amer. Math. Soc.},
   volume={372},
   date={2019},
   number={8},
   pages={5453--5481},
   issn={0002-9947},
}
\bib{Park}{article}{
   author={Park, Bae Jun},
   title={Equivalence of (quasi-)norms on a vector-valued function space and
   its applications to multilinear operators},
   journal={Indiana Univ. Math. J.},
   volume={70},
   date={2021},
   number={5},
   pages={1677--1716},
   issn={0022-2518},

}
\bib{Runst_Sickel}{book}{
   author={Runst, Thomas},
   author={Sickel, Winfried},
   title={Sobolev spaces of fractional order, Nemytskij operators, and
   nonlinear partial differential equations},
   series={De Gruyter Series in Nonlinear Analysis and Applications},
   volume={3},
   publisher={Walter de Gruyter \& Co., Berlin},
   date={1996},
   pages={x+547},
   isbn={3-11-015113-8},

}
\bib{Salva_Wolf}{article}{
   author={Rodr\'{\i}guez-L\'{o}pez, Salvador},
   author={Staubach, Wolfgang},
   title={Some endpoint estimates for bilinear paraproducts and
   applications},
   journal={J. Math. Anal. Appl.},
   volume={421},
   date={2015},
   number={2},
   pages={1021--1041},
   issn={0022-247X},

}
\bib{Sickel_Triebel}{article}{
   author={Sickel, Winfried},
   author={Triebel, Hans},
   title={H\"{o}lder inequalities and sharp embeddings in function spaces of
   $B^s_{pq}$ and $F^s_{pq}$ type},
   journal={Z. Anal. Anwendungen},
   volume={14},
   date={1995},
   number={1},
   pages={105--140},
   issn={0232-2064},
}
\bib{Trie83}{book}{
   author={Triebel, Hans},
   title={Theory of function spaces},
   series={Monographs in Mathematics},
   volume={78},
   publisher={Birkh\"{a}user Verlag, Basel},
   date={1983},
   pages={284},

}
\end{biblist}
\end{bibdiv}	

\end{document}